\tikzset{
  LabelStyle/.style = { rectangle, rounded corners, draw,
                        minimum width = 1em, 
                        font =  },
  VertexStyle/.append style = { inner sep=3pt,
                                font = \large},
  EdgeStyle/.append style = {->, bend left} }
\tikzset{
    ultra thick/.style={line width=5.0pt}
}
\theoremstyle{plain}
\newtheorem{theorem}{Theorem}
\newtheorem{prop}[theorem]{Proposition}
\newtheorem{lemma}[theorem]{Lemma}
\newtheorem{coro}[theorem]{Corollary}
\theoremstyle{definition}
\newtheorem{definition}[theorem]{Definition}
\newtheorem{remark}[theorem]{Remark}
\newtheorem{question}[theorem]{Question}
\newtheorem{example}[theorem]{Example}
\newcommand{\Z}{{\mathbb Z}}
\newcommand{\Q}{{\mathbb Q}}
\newcommand{\R}{{\mathbb R}}
\newcommand{\N}{{\mathbb N}}
\newcommand{\mc}{\mathcal}
\newcommand{\dsub}{\varphi}
\begin{document}

\title{
Shifts of finite type and random substitutions
}

\author{Philipp Gohlke}
\author{Dan Rust}
\author{Timo Spindeler}
\date{\today}
\address{Fakult\"at f\"ur Mathematik, Universit\"at Bielefeld, \newline
\hspace*{\parindent}Postfach 100131, 33501 Bielefeld, Germany}
\email{\{pgohlke,drust,tspindel\}@math.uni-bielefeld.de}

\begin{abstract}
We prove that every topologically transitive shift of finite type in one dimension is topologically conjugate to a subshift arising from a primitive random substitution on a finite alphabet.
As a result, we show that the set of values of topological entropy which can be attained by random substitution subshifts contains all Perron numbers and so is dense in the positive real numbers.
We also provide an independent proof of this density statement using elementary methods.
\end{abstract}

\keywords{Random substitutions, shifts of finite type, topological entropy}

\subjclass[2010]{
37B10, 37A50, 37B40, 52C23
}

\maketitle

\section{Introduction}\label{SEC:intro}
One of the first appearances of random substitutions in the literature is the work of Godr\`{e}che and Luck \cite{GL:random} in the late 80s where they introduced the \emph{random Fibonacci substitution} as a toy model for a one-dimensional quasicrystal with positive topological entropy but still retaining long range order---a quasicrystal lying in the intermediate regime between totally ordered and totally random.
A random substitution is a generalisation of the classical notion of a substitution on a finite alphabet \cite{F:book} whereby, instead of letters having a pre-determined image under the substitution, the random substitution assigns (with a given probability distribution) a finite (or possibly infinite) set of possible images to each letter.
The random substitution is applied independently to each letter in any finite or infinite word.
Since their work, there have been a handful of advances in the study of random substitutions, as well as several instances of independent discovery such as the essentially identical notions of Dekking and Meester's \emph{multi-valued substitutions} \cite{DM:multi} or the \emph{$0L$-systems} of formal language theory \cite{RS:0L}.

There has been a recent attempt to bring the theory of random substitutions squarely into the realm of symbolic dynamics \cite{RS:random} so that the dynamical and metric formalisms available to the study of subshifts over finite alphabets can be utilised.
There, the subshifts associated to random substitutions were called \emph{RS-subshifts} and we laid the groundwork for the topological and dynamical theory of RS-subshifts, as well as beginning to understand the ergodic properties of RS-subshifts in terms of the assigned probabilities of image words under substitution.
Topological entropy is of particular interest when studying RS-subshifts \cite{bible, BSS:random, M:moll-thesis, M:diffraction-noble, N:fibonacci-entropy} and remains relatively unexplored, except in cases of specific examples or families of examples.

We continue that work here by highlighting the strong links between RS-subshifts and one-dimensional shifts of finite type (SFTs) \cite{LM:introduction-to-symbolic}.
We should mention that there are links that have been previously explored between SFTs in dimensions greater than one and classical deterministic substitutions in Euclidean space.
In particular, celebrated work of Goodman-Strauss \cite{GS:matching-rules} has shown that every substitution tiling of $\mathbb{R}^n$ (under mild restrictions) admits a finite set of marked tiles with local matching rules (a generalisation of higher dimensional SFTs) such that those tiles can only be arranged into tilings with the same local structure as the substitution tiling. Fernique and Ollinger have also provided a generalisation of this result to the combinatorial setting \cite{FO:sofic}.
It should also be noted that, in dimension one, no aperiodic deterministic substitution subshifts can be topologically conjugate to an SFT as one-dimensional SFTs necessarily contain periodic points---hence Goodman-Strauss' result cannot apply in dimension one.

As their study is still in its infancy, there remain many open problems about RS-subshifts.
One of a general class of open questions relating to the above is the following: When can one recognise a given subshift as an RS-subshift (possibly up to conjugacy)? The main result of this article addresses this question for SFTs.
We are able to show that ever topologically transitive shift of finite type is topologically conjugate to an RS-subshift.
Our proof relies on the interpretation of SFTs as vertex shifts on finite directed graphs and an elementary decomposition result of cycles in such graphs into simple cycles.
An immediate corollary of this result is that the logarithm of every Perron number (an algebraic number which dominates all it algebraic conjugates in absolute value) appears as a value of topological entropy for some RS-subshift.
As a consequence, the set of possible values for topological entropy of RS-subshifts is a dense subset of the positive reals---this weaker statement warrants an independent proof using elementary techniques, and this is provided in Section \ref{SEC:entropy}.

The paper is structured as follows:
In Section \ref{SEC:cycles}, we review some basic terminology from the theory of directed graphs and prove a key decomposition result for directed cycles which will be used later in the proof of the main theorem.
In Section \ref{SEC:sft}, we review basic results relating to one-dimensional shifts of finite type.
Section \ref{SEC:rs-subshift} gives a brief introduction to subshifts associated with random substitutions and outlines some of the key results from previous work of two of the authors \cite{RS:random}.
Section \ref{SEC:main} is devoted to the proof of our main theorem wherein we show that every topologically transitive SFT is topologically conjugate to an RS-subshift.
We also provide examples showing that our proof is constructive and can be applied in practice.
As an aside, we also briefly explain and illustrate via an example how these techniques can easily be modified to SFTs represented as edge shifts, rather than vertex shifts.
We conclude with Section \ref{SEC:entropy}, wherein we provide a series of results concerning the topological entropy of RS-subshifts.
Methods applied in this section appear to be widely applicable and ubiquitously useful whenever one wishes to bound (or explicitly calculate) the entropy of an RS-subshift, and which are especially convenient when trying to `cook up' examples of RS subshifts with entropy satisfying some bounding or valuation criteria.

\section{Cycles in directed graphs}\label{SEC:cycles}
A \emph{finite directed graph} or just \emph{graph} $G$ is a pair $G = (V,E)$ where $V$ is a finite set and $E$ is a subset of $V^2$.
The \emph{source map} $s \colon E \to V$ and the \emph{target map} $t \colon E \to V$ are given by projecting to the first and second components respectively of $V^2$.
That is, $s(v_1,v_2) = v_1$ and $t(v_1,v_2) = v_2$.

Our graphs will always be finite, and essential.
That is, for every vertex $v$, we have at least one edge $e$ with $s(e) = v$ and at least one edge $f$ with $t(f) = v$.
Moreover, unless explicitly stated, we will not consider multiple edges between a pair of vertices.
\begin{definition}
Let $G = (V, E)$ be a graph.
A \emph{path} $\gamma$ of length $\ell$ in $G$ is a finite non-empty ordered tuple of edges $\gamma = (e_1, \cdots, e_{\ell})$ such that $t(e_i) = s(e_{i+1})$ for all $1 \leq i \leq \ell - 1$.
The \emph{source} $s(\gamma)$ of $\gamma$ is $s(\gamma) = s(e_1)$ and the \emph{target} $t(\gamma)$ of $\gamma$ is $t(\gamma) = t(e_{\ell})$.
If $s(\gamma) = t(\gamma)$, we call $\gamma$ a \emph{cycle}.
The \emph{root} of a cycle $\gamma$ is the vertex $s(\gamma) = t(\gamma)$.
A path or cycle $\gamma = (e_1, \cdots, e_\ell)$ is called \emph{simple} if $s(e_i) = s(e_j) \implies i = j$.
That is, $\gamma$ is a path or cycle which traverses each vertex at most once.
We say that $G$ is \emph{strongly connected} if for every $v, w \in V$, there exists a path $\gamma$ whose source and target are $s(\gamma) = v$ and $t(\gamma) = w$ respectively.
\end{definition}
\begin{remark}
Our definitions are in line with those in the book of Lind and Marcus \cite{LM:introduction-to-symbolic}. However, note that the definition of path given above differs from that appearing in some other texts such as Diestel's \cite{D:book-diestel}. In particular, our definition of path allows for the traversing of the same edge more than once.
\end{remark}
\begin{lemma}\label{LEM:cycles}
Let $G$ be a graph. If $G$ is strongly connected, then for every $v,w \in V$, there exists a cycle $\gamma = (e_1, \ldots, e_\ell)$ rooted at $v$ such that $s(e_i) = w$ for some edge $e_i$ appearing in $\gamma$.
\end{lemma}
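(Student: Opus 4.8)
The plan is to build the required cycle simply by concatenating two paths supplied by strong connectivity. First, applying the definition of strong connectivity to the pair $(v,w)$, I obtain a path $\alpha = (a_1, \ldots, a_k)$ with $s(\alpha) = v$ and $t(\alpha) = w$. Applying the same definition to the pair $(w,v)$, I obtain a path $\beta = (b_1, \ldots, b_m)$ with $s(\beta) = w$ and $t(\beta) = v$. Note that both paths are non-empty, since paths were defined to have length $\ell \geq 1$.

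Next I would check that the concatenation $\gamma = (a_1, \ldots, a_k, b_1, \ldots, b_m)$ is again a path. The incidence condition $t(e_i) = s(e_{i+1})$ holds among the $a_i$ and among the $b_j$ by hypothesis, and at the junction it reads $t(a_k) = t(\alpha) = w = s(\beta) = s(b_1)$, so it holds throughout. Moreover $s(\gamma) = s(a_1) = v = t(b_m) = t(\gamma)$, so $\gamma$ is in fact a cycle rooted at $v$. Finally, the edge $b_1$ occurs in $\gamma$ and satisfies $s(b_1) = s(\beta) = w$, which is exactly the asserted property.

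I do not expect any real obstacle: the statement is essentially an unfolding of the definition of strong connectivity, the only point worth spelling out being that joining two paths at a common endpoint yields a path (and here a closed one). The case $v = w$ requires no separate argument — it is covered by the above — although one could also observe directly that strong connectivity applied to $(v,v)$ already produces a non-empty cycle rooted at $v$ whose first edge has source $v$.
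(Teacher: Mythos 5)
Your argument is correct and is essentially identical to the paper's proof: both obtain a path from $v$ to $w$ and a path from $w$ to $v$ by strong connectivity, concatenate them, verify the incidence condition at the junction and the equality of source and target, and note that the first edge of the second path has source $w$. Your additional remark about the case $v=w$ is fine but, as you say, not needed.
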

\begin{proof}
The graph $G$ is strongly connected, so let $\delta = (e_1, \ldots, e_i)$ be a path such that $s(\delta) = v$ and $t(\delta) = w$, and let $\eta = (f_1, \ldots, f_j)$ be a path such that $s(\eta) = w$ and $t(\eta) = v$. Then the concatenation of $\delta$ and $\eta$ given by $\gamma = \delta \# \eta = (e_1, \ldots, e_i, f_1, \ldots, f_j)$ is a cycle rooted at $v$ because $t(e_i) = w = s(f_1)$ and $s(\gamma) = s(e_1) = v = t(f_j) = t(\gamma)$.
\end{proof}
\begin{lemma}\label{LEM:simple-cycles}
Let $G = (V,E)$ be a graph and let $\gamma = (e_1, \ldots, e_\ell)$ be a cycle in $G$.
Either $\gamma$ is a simple cycle, or there exists a simple subcycle $\gamma_0 = (e_i, \ldots, e_j)$ of $\gamma$ such that $\tilde{\gamma}_1 = (e_1, \ldots, e_{i-1}, e_{j+1}, \ldots e_{\ell})$ is a cycle.
\end{lemma}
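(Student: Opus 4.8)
The plan is to prove the dichotomy by an extremal (shortest-loop) argument: if $\gamma$ is not simple we will locate a repeated vertex along $\gamma$ whose two occurrences are as close together as possible, show that the stretch of $\gamma$ between them is the desired simple subcycle, and observe that deleting that stretch leaves a cycle behind. First I would dispose of the trivial alternative: if $\gamma$ is simple there is nothing to do, so assume it is not. By the definition of simplicity the set of pairs $(p,q)$ with $1 \le p < q \le \ell$ and $s(e_p) = s(e_q)$ is non-empty, so I would fix such a pair $(i,k)$ for which $k - i$ is minimal (equivalently, one may take $k$ minimal, so that $s(e_1), \ldots, s(e_{k-1})$ are pairwise distinct — either choice does the job).

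With $j := k-1$, set $\gamma_0 := (e_i, \ldots, e_j)$ and $\tilde{\gamma}_1 := (e_1, \ldots, e_{i-1}, e_{j+1}, \ldots, e_\ell)$. The block $\gamma_0$ is non-empty because $i < k$; it is a cycle because its source is $s(e_i)$ and its target is $t(e_{k-1}) = s(e_k) = s(e_i)$; and it is simple, since any repetition $s(e_p) = s(e_q)$ with $i \le p < q \le k-1$ would be a witnessing pair with $q - p < k - i$, contradicting minimality (when $k - i = 1$ this degenerates to $\gamma_0 = (e_i)$, a loop at $s(e_i)$, which is vacuously simple). For $\tilde{\gamma}_1$, the only adjacency not inherited directly from $\gamma$ is at the splice, where $t(e_{i-1}) = s(e_i) = s(e_k) = s(e_{j+1})$, exactly as needed; its source equals $s(e_1)$ (or $s(e_k) = s(e_i) = s(e_1)$ when $i = 1$) and its target equals $t(e_\ell)$, and these agree because $\gamma$ is a cycle, so $\tilde{\gamma}_1$ is a cycle. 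It is non-empty because it contains $e_{j+1} = e_k$ and $k \le \ell$.

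The only point requiring genuine care is the index bookkeeping: checking that both the excised block $\gamma_0$ and the remainder $\tilde{\gamma}_1$ are non-empty (hence honest cycles in our sense), that ``simple'' is being tested on the correct sub-tuple of edges, and that it is precisely the closure condition $s(e_1) = t(e_\ell)$ of $\gamma$ that makes $\tilde{\gamma}_1$ close up after the deletion. There is no combinatorial or analytic obstacle beyond this; the argument reduces to a single extremal choice of a repeated vertex.
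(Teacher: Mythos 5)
Your proof is correct and takes essentially the same approach as the paper: both locate a repeated vertex $s(e_i)=s(e_k)$, take the block $(e_i,\ldots,e_{k-1})$ as the simple subcycle $\gamma_0$, and use the splice relation $t(e_{i-1})=s(e_i)=s(e_k)=s(e_{j+1})$ together with the closure $s(e_1)=t(e_\ell)$ to see that the remainder $\tilde{\gamma}_1$ is again a cycle. The only (cosmetic) difference is how simplicity of $\gamma_0$ is secured: the paper iterates the extraction inside successively shorter subcycles and appeals to termination, whereas you make a single extremal choice of the closest pair of repeated occurrences, which yields simplicity in one step.
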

\begin{proof}
Suppose $\gamma$ is not simple. Then there exist $1 \leq i < j \leq \ell$ such that $s(e_i) = s(e_j)$.
The subpath $\gamma' = (e_i, \ldots, e_{j-1})$ is a cycle because $t(e_{j-1}) = s(e_j) = s(e_i)$.
The cycle $\gamma'$ has length $\ell'$ strictly smaller than $\ell$ by the bounds imposed on $i$ and $j$.
If $\gamma'$ is simple then we are done.
If not, then we repeat the process of finding a subcycle of $\gamma'$ which must also be a subcycle of $\gamma$.
As every cycle of length $1$ is simple, it follows that this inductive process must eventually find a simple subcycle $\gamma_0$ of $\gamma$.

Now consider the removal of a subcycle $\gamma_0 = (e_i, \ldots, e_j)$ from a cycle $\gamma  = (e_1, \ldots, e_\ell)$ which leaves a sequence of edges $\tilde{\gamma}_1 = (e_1, \ldots, e_{i-1}, e_{j+1}, \ldots e_{\ell})$.
From the cycle relations, we have $t(e_{i-1}) = s(e_i) = t(e_j) = s(e_{j+1})$ and so $\tilde{\gamma}_1$ is also a cycle.
\end{proof}
This gives us a decomposition result for writing cycles in a directed graph as `nested insertions' of simple cycles.
\begin{prop}[Cycle decomposition]\label{PROP:cycle-decomp}
For every cycle $\gamma$, there exists a finite sequence of simple cycles $\gamma_0, \ldots, \gamma_k$ and a sequence of cycles $\tilde{\gamma}_0, \ldots, \tilde{\gamma}_{k+1}$ such that $\tilde{\gamma}_0 = \gamma$, $\tilde{\gamma}_{k+1}$ is simple, $\gamma_i$ is a subcycle of $\tilde{\gamma}_i$ and $\tilde{\gamma}_{i+1}$ is given by removing $\gamma_i$ from $\tilde{\gamma}_i$.
\end{prop}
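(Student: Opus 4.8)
The plan is to obtain the decomposition by iterating Lemma~\ref{LEM:simple-cycles}, using an induction on the length of the cycle to guarantee that the iteration terminates. Put $\tilde\gamma_0 := \gamma$. If $\tilde\gamma_0$ is already simple, we stop immediately: the list $\gamma_0,\dots,\gamma_k$ of simple cycles is empty and $\tilde\gamma_{k+1} := \tilde\gamma_0$ is simple, so all requirements hold vacuously. Otherwise, Lemma~\ref{LEM:simple-cycles} furnishes a simple subcycle $\gamma_0$ of $\tilde\gamma_0$ together with a cycle $\tilde\gamma_1$ obtained by removing $\gamma_0$ from $\tilde\gamma_0$. Repeating this step---applying Lemma~\ref{LEM:simple-cycles} to $\tilde\gamma_1$, then to $\tilde\gamma_2$, and so on, for as long as the current cycle is not simple---produces cycles $\tilde\gamma_0,\tilde\gamma_1,\tilde\gamma_2,\dots$ and simple cycles $\gamma_0,\gamma_1,\dots$ with $\gamma_m$ a subcycle of $\tilde\gamma_m$ and $\tilde\gamma_{m+1}$ the result of removing $\gamma_m$ from $\tilde\gamma_m$, which is exactly the structure demanded by the proposition.

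It remains to see that the process halts. Inspecting the proof of Lemma~\ref{LEM:simple-cycles}, the simple subcycle it extracts from a non-simple cycle is always a \emph{proper} subcycle: it is located inside a subcycle of the form $(e_a,\dots,e_{b-1})$ with $1\le a<b\le\ell$, whose length $b-a$ is at most $\ell-1$. Hence, when we delete $\gamma_m$ from $\tilde\gamma_m$, the remaining edge sequence $\tilde\gamma_{m+1}$ is non-empty and has length strictly smaller than that of $\tilde\gamma_m$. The lengths therefore form a strictly decreasing sequence of positive integers, so after finitely many steps---say at index $k+1$---we reach a cycle $\tilde\gamma_{k+1}$ that is simple, at which point the iteration stops. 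This yields the finite sequences $\gamma_0,\dots,\gamma_k$ and $\tilde\gamma_0=\gamma,\dots,\tilde\gamma_{k+1}$ with all the asserted properties. Equivalently, one may phrase the argument as a formal induction on the length of $\gamma$: the base case is $\gamma$ simple, and in the inductive step one applies the induction hypothesis to the strictly shorter cycle $\tilde\gamma_1$ and prepends the pair $(\gamma_0,\tilde\gamma_0)$ to the two resulting sequences, reindexing accordingly.

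I do not anticipate any real difficulty here; the statement is essentially a repackaging of Lemma~\ref{LEM:simple-cycles}. The only points meriting attention are (i) checking that the simple subcycle produced by Lemma~\ref{LEM:simple-cycles} is genuinely proper, which is what makes the length strictly drop and so forces termination, and (ii) the index bookkeeping when assembling the two sequences, in particular handling the degenerate case in which $\gamma$ is already simple and the list of simple cycles is empty (so that $k$ may be read as $-1$).
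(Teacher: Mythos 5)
Your argument is correct and follows the paper's own proof exactly: repeated application of Lemma~\ref{LEM:simple-cycles} together with the observation that $\tilde{\gamma}_{i+1}$ is strictly shorter than $\tilde{\gamma}_i$, so the process terminates in a simple cycle. Your extra attention to the fact that the extracted simple subcycle is proper (hence the remainder is a non-empty, strictly shorter cycle) only makes explicit what the paper leaves implicit.
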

\begin{proof}
This is an easy repeated application of Lemma \ref{LEM:simple-cycles} and noting that $\tilde{\gamma}_{i+1}$ is strictly shorter as a cycle than $\tilde{\gamma}_i$.
\end{proof}
\begin{remark}
Note that the cycle decomposition given by Proposition \ref{PROP:cycle-decomp} is not unique.
Take as a basic example any concatenation of two simple cycles rooted at the same vertex.
The process can be made canonical however (for instance in order to be implemented in a computer) by scanning through a cycle from left to right until the first simple cycle is found and removed, and then repeating the process.
\end{remark}

\section{Shifts of finite type}\label{SEC:sft}

\subsection{Introduction to SFTs}
A good introduction to shifts of finite type and symbolic dynamics in general is the book of Lind and Marcus \cite{LM:introduction-to-symbolic}.
There are several equivalent definitions of shifts of finite type up to topological conjugacy.
We will freely pass between these treatments.

Let $\mc A$ be a finite alphabet, $\mc A^{+}$ the set of words in $\mc A$ with finite length and $\mc A^{\ast} = \mc A^{+} \cup \{\varepsilon\}$, where $\varepsilon$ denotes the empty word.
We write the usual concatenation of words as $u v$ for $u,v \in \mc A^{+}$, and $v \triangleleft u$ if $v$ is a subword of $u$.
Under word concatenation, $\mc A^\ast$ forms a free monoid generated by $\mc A$ with identity given by the empty word $\varepsilon$.
All subsets of $\mc A^\Z$ for a finite alphabet $\mc A$ will automatically adopt the subspace topology, where $\mc A$ is a discrete space and $\mc A^\Z$ is endowed with the product topology.

Let $A$ be an $n \times n$ matrix with entries in $\{0,1\}$.
The \emph{shift of finite type} $X_A$ associated to the matrix $A$ is the space
\[
X_A = \{\cdots x_{-1} x_0 x_1 \cdots = x \in \{0, \ldots, n-1\}^{\Z} \mid A_{x_{i}, x_{i+1}} = 1\}.
\]

Let $\mc F$ be a finite subset of $\mc A^+$ which we call a set of \emph{forbidden words}.
The \emph{shift of finite type} $X_{\mc F}$ associated to $\mc F$ is the space
\[
X_{\mc F} = \{ x \in \mc A^\Z \mid u \triangleleft x \implies u \notin \mc F\}.
\]
Note that, up to topological conjugacy of $X_{\mc F}$, we may assume that all words $u$ in $\mc F$ have length $|u| = 2$  by taking \emph{higher powers} \cite[Prop. 2.3.9]{LM:introduction-to-symbolic}.

Let $G = (V,E)$ be a directed graph.
The \emph{vertex shift} $\widehat{X}_G$ of $G$ is the space
\[
\widehat{X}_G = \{x \in V^\Z \mid (x_i, x_{i+1}) \in E\}.
\]
Vertex shifts are SFTs.
There is a further notion of \emph{edge shifts} on directed graphs.
We will revisit edge shifts later in Section \ref{SEC:main}.

\begin{definition}
Let $A$ be a non-negative integer matrix.
We call $A$ \emph{irreducible} if, for every $i,j$, there exists an integer $k \geq 1$ such that $(A^k)_{ij} \geq 1$.
We call $A$ \emph{primitive} if there exists an integer $k \geq 1$ such that $(A^k)_{ij} \geq 1$ for all $i,j$.
\end{definition}
It is also well known that topologically transitive SFTs have a dense subset of periodic points.
The following result is also well known and will be used in the proof of our main result in Section \ref{SEC:main}.
\begin{prop}
Let $A$ be a $0$-$1$ matrix and let $G_A$ be a graph whose adjacency matrix is $A$.
The following are equivalent.
\begin{itemize}
\item $A$ is irreducible.
\item $G_A$ is strongly connected.
\item $X_A$ is topologically transitive.
\end{itemize}
\end{prop}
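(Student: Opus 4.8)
The plan is to split the three-way equivalence into the purely combinatorial statement ``$A$ irreducible $\iff$ $G_A$ strongly connected'' and the symbolic-dynamical statement ``$G_A$ strongly connected $\iff$ $X_A$ topologically transitive'', the latter routed through the standard description of transitive subshifts via their language.

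For the combinatorial equivalence I would first record the standard fact that, for a non-negative integer matrix $A$, the entry $(A^k)_{ij}$ equals the number of length-$k$ paths in $G_A$ from vertex $i$ to vertex $j$; this is proved by an immediate induction on $k$. Granting it, the equivalence is a definition chase: $A$ is irreducible exactly when for every $i,j$ there is a $k \geq 1$ with $(A^k)_{ij} \geq 1$, i.e.\ exactly when every ordered pair of vertices is joined by a (non-empty) path, which is precisely the definition of $G_A$ being strongly connected.

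For the dynamical equivalence I would use the well-known characterisation (see \cite{LM:introduction-to-symbolic}) that a nonempty subshift $X$ is topologically transitive if and only if its language --- the set of finite subwords of points of $X$ --- is \emph{irreducible}, in the sense that for all words $u, v$ in the language there is a word $w$ with $uwv$ again in the language. Assuming this, ``strongly connected $\Rightarrow$ transitive'' goes as follows. A word $u = u_1 \cdots u_m$ in the language of $X_A$ is just a finite vertex sequence with $(u_i, u_{i+1}) \in E$; writing $a = u_m$ and letting $b$ be the first letter of $v$, strong connectivity supplies a path from $a$ to $b$ in $G_A$, and inserting its intermediate vertices as a word $w$ makes $uwv$ again a valid vertex sequence, hence a word in the language of $X_A$. (One uses here that every valid finite vertex sequence extends to a bi-infinite one, which holds because a strongly connected graph is automatically essential: any vertex lies on a cycle, so has both an incoming and an outgoing edge.) Conversely, for ``transitive $\Rightarrow$ strongly connected'', observe that for every vertex $a$ the length-one word $a$ lies in the language of $X_A$ --- this is where we use the standing assumption that $G_A$ is essential --- so, given vertices $a$ and $b$, transitivity produces $w$ with $awb$ in the language, and this word is exactly a path from $a$ to $b$ in $G_A$.

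The genuinely routine parts are the induction identifying $(A^k)_{ij}$ with a path count and the extension of finite vertex sequences to bi-infinite ones. The point I would be careful to state (rather than grind through) is the equivalence of topological transitivity with irreducibility of the language; I would cite it from \cite{LM:introduction-to-symbolic} or, if a self-contained argument is preferred, derive it in a short paragraph from the cylinder-set formulation of transitivity. I would also flag that the implication ``transitive $\Rightarrow$ strongly connected'' really does need essentiality: for $A = \mathrm{diag}(0,1)$ the shift $X_A$ is a single fixed point, hence transitive, while $G_A$ is neither strongly connected nor is $A$ irreducible --- so without the standing essentiality convention the proposition would be false.
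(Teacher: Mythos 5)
The paper states this proposition without proof, citing it as well known (it is standard material from Lind and Marcus \cite{LM:introduction-to-symbolic}), and your argument is exactly the standard route one would cite: the count of length-$k$ paths by $(A^k)_{ij}$ for the equivalence of irreducibility with strong connectivity, and the characterisation of transitivity of a subshift via irreducibility of its language for the dynamical half; it is correct. Your observation that essentiality is genuinely needed for ``transitive $\Rightarrow$ strongly connected'' (e.g.\ a graph with one looped vertex and one isolated vertex) is a worthwhile precision and is consistent with the paper's standing convention that all graphs are essential.
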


\section{Random substitution subshifts}\label{SEC:rs-subshift}
The theory of random substitutions is still in its infancy, and so far there is no canonical formulation. In what follows, we partially reuse notation and conventions from \cite{RS:random}. For an alternative formulation, see also the thesis of the first author \cite{G:gohlke-thesis}.

A function $\dsub \colon \mc A \to \mc A^+$ is called a \emph{deterministic substitution} and uniquely induces a monoid homomorphism $\dsub \colon \mc A^\ast \to \mc A^\ast$. Deterministic substitutions are extremely well-studied \cite{bible, F:book}.
In contrast, a random substitution can take multiple values on a single letter $a \in \mc A$. 
\begin{definition}
Let $\mc A $ be a finite alphabet, $\mathcal P (\mc A^{+})$ the power set of $A^{+}$ and define $\mc S (\mc A^{+}) = \{ B \in \mathcal P (A^{+}) \mid  B \text{ finite} \} \setminus \varnothing$.
A \emph{random substitution} on $\mc A$ is a map $\vartheta \colon \mc A \to \mathcal S(\mc A^{+})$.
We call a word $u$ a \emph{realisation} of $\vartheta$ on $a$ if $u \in \vartheta(a)$, in this case we also use the notation $u \overset{\bullet}{=} \vartheta(a)$. We may also write $\vartheta^k(v) \overset{\bullet}{=} \vartheta^m(w)$ if there is a common realisation (that is, their intersection is non-empty).
If $v \triangleleft u$, with $u \overset{\bullet}{=} \vartheta(a)$, then we write $v \blacktriangleleft \vartheta(a)$.
\end{definition}
A word which can appear as a realisation $u \overset{\bullet}{=}\vartheta(a)$ for some $a\in \mc A$ is sometimes called an \emph{inflation word}.
\begin{remark}
Note that in our definition we make no reference to the realisation probabilities of the elements in $\vartheta(a)$, $a \in \mc A$, unlike in \cite{RS:random}.
The subshift associated with $\vartheta$ is independent of the choice of such probabilities, as long as they are non-degenerate.
This justifies their omission here, where we study only their topological dynamics.
\end{remark}
We can extend $\vartheta$ to a function $\mc A^{+} \to \mc S(\mc A^+)$ by concatenation,
\[
\vartheta(a_1 \cdots a_m) = \vartheta(a_1) \cdots \vartheta(a_m) := \{ u_1 \cdots u_m \mid u_i \in \vartheta(a_i), \; \text{for } i \in \{ 1, \ldots, m \} \},
\]
and consequently to a function $\vartheta \colon \mathcal S (\mc A^{+}) \to \mc S (\mc A^+)$ by $\vartheta(B) := \bigcup_{u \in B} \vartheta(u)$.
This then allows us to take powers of $\vartheta$ by composition, giving $\vartheta^k \colon \mathcal S (\mc A^{+}) \to \mathcal S (\mc A^{+}) $, for any $k \geq 0$, where $\vartheta^0 := \operatorname{id}_{\mc S(\mc A^+)}$ is the identity and $\vartheta^{k+1} := \vartheta \circ \vartheta^k$.
We say that a word $u \in \mc A^{\ast}$ is $\vartheta$-legal, if there is a $k \in \N$ and an $a \in \mc A$ such that $u \blacktriangleleft \vartheta^k(a)$.
\begin{example}[Random Fibonacci]\label{EX:fib}
The \emph{random Fibonacci substitution} is defined on the alphabet $\mc A = \{a,b\}$ by
\[
\vartheta \colon a \mapsto \{ab,ba\}, \: b \mapsto \{a\}.
\]
The next two iterates of the random Fibonacci substitution are then given by

\[
\begin{array}{rl}
\vartheta^2 \colon & a \mapsto \{aba,baa,aab\}, \:\: b \mapsto \{ab,ba\}\\
\vartheta^3 \colon & a \mapsto \{abaab,ababa,baaab,baaba,aabab,aabba,abbaa,babaa\}, \:\: b \mapsto \{aba,baa,aab\}.
\end{array}
\]
So a realisation of $\vartheta^3(a)$ is given by $baaab \overset{\bullet}{=}\vartheta^3(a)$, hence the word $aaa \blacktriangleleft\vartheta^3(a)$ is $\vartheta$-legal.
\end{example}
\begin{definition}
The \emph{language} of a random substitution $\vartheta$ is the set of $\vartheta$-legal words, that is
\[
\mc L_{\vartheta} = \{u \blacktriangleleft \vartheta^k(a) \mid k \geq 0, a \in \mc A  \}.
\]
\end{definition}
From this, we construct a subshift associated with $\vartheta$, similar to the deterministic setting.
\begin{definition}
Let $\vartheta$ be a random substitution on a finite alphabet $\mc A$. Then the \emph{random substitution subshift} of $\vartheta$ (abbreviated \emph{RS-subshift}) is given by
\[
X_{\vartheta} = \{ w \in \mc A^{\Z} \mid u\triangleleft w \Rightarrow u \in \mc L_{\vartheta} \}.
\]
\end{definition}
It is easy to see that $X_{\vartheta}$ indeed forms a subshift---a closed, shift-invariant subspace of the full shift $\mc A^\Z$.
RS-subshifts are therefore compact.

Certain regularity properties of random substitutions make the study of their RS-subshifts more accessible.
The following definition is satisfied by many interesting examples of random substitutions, and certainly all those considered here.
\begin{definition}
A random substitution $\vartheta$ on a finite alphabet $\mc A$ is called \emph{primitive} if there exists a $k \in \N$ such that for all $a_i, a_j \in \mc A$ we have $a_i \blacktriangleleft \vartheta^k(a_j)$.
If $\vartheta$ is a primitive random substitution, then we call the associated subshift $X_{\vartheta}$ a \emph{primitive RS-subshift}.
\end{definition}
Note that, in contrast to the deterministic case, primitivity of a random substitution $\vartheta$ is not enough to conclude that the subshift $X_{\vartheta}$ is non-empty.

Many results relating to the dynamics and topology of primitive RS-subshifts were presented in previous work of two of the authors \cite{RS:random}.
We summarise some of these results in the next proposition\footnote{For a definition of splitting pairs, we refer the reader to \cite{RS:random}.}.
\begin{prop}\label{PROP:rs-subs}
Let $\vartheta$ be a random substitution with associated RS-subshift $X_\vartheta$. Then:
\begin{itemize}
\item $X_\vartheta$ is closed under substitution. That is, if $w \in X_\vartheta$, then $v \in X_\vartheta$ for all realisations $v \overset{\bullet}{=} \vartheta(w)$.
\item $X_\vartheta$ is closed under taking preimages. That is, if $w \in X_\vartheta$, then there exists a $k \geq 0$ and an element $w' \in X_\vartheta$ such that $S^k(w) \overset{\bullet}{=} \vartheta(w')$.
\item If $\vartheta$ is primitive, then:
\begin{itemize}
\item $X_\vartheta$ is empty if and only if, for all $a \in \mc A$ and all realisations $u \overset{\bullet}{=}\vartheta(a)$,  we have $|u| = 1$.
\item $X_\vartheta$ contains a dense orbit.
\item $X_\vartheta$ is topologically transitive.
\item $X_\vartheta$ is either finite or homeomorphic to a Cantor set.
\item The set of periodic elements in $X_\vartheta$ is either empty or dense.
\item $X_\vartheta$ contains a dense subset of linearly repetitive elements.
\item $X_\vartheta$ is either minimal or contains infinitely many distinct minimal subspaces.
\item If $\vartheta$ admits a splitting pair, then $h_{\operatorname{top}}(X_\vartheta) > 0$.
\end{itemize}
\end{itemize}
\end{prop}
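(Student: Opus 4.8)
Each of these statements is established in \cite{RS:random}; what follows is a sketch of the route one would take, organised around the single technical engine behind all of them: the compactness of $\mc A^{\Z}$, together with the fact that the $\vartheta$-legal language is preserved, in a suitably controlled way, both under applying $\vartheta$ and under peeling off one application of $\vartheta$. The first bullet is the easy half of this. If $w \in X_\vartheta$ and $v \overset{\bullet}{=} \vartheta(w)$, then any finite subword of $v$ lies inside a realisation of $\vartheta(u)$ for some finite $u \triangleleft w$; since $u$ is legal, say $u \blacktriangleleft \vartheta^k(a)$, that realisation lies in $\vartheta^{k+1}(a)$ and is therefore legal, so every subword of $v$ is legal and $v \in X_\vartheta$.

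The second bullet---closure under taking preimages---is the first place that needs genuine work, and is a weak form of recognisability. For each $n$ the window $w_{[-n,n]}$ is legal, so it occurs inside a realisation of $\vartheta^{k}(a)$ with $k \geq 1$; peeling off one application of $\vartheta$ exhibits $w_{[-n,n]}$ as a subword of a realisation of $\vartheta(b_1 \cdots b_p)$---that is, a cutting of the window into inflation-word blocks, together with a finite preimage letter sequence $b_1\cdots b_p$ and an offset recording where $w_{-n}$ sits relative to the block boundaries. There are only boundedly many possible offsets and finitely many letters, so a diagonal argument extracts a coherent limit as $n \to \infty$: a bi-infinite word $w'$ and an integer $k$ with $S^{k}(w) \overset{\bullet}{=} \vartheta(w')$, and one checks $w' \in X_\vartheta$ because its subwords arise as desubstitutions of legal words. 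Iterating these two closure properties against primitivity then yields the next block of statements. If every realisation $u \overset{\bullet}{=} \vartheta(a)$ has $|u|=1$ then no legal word has length $2$ and $X_\vartheta = \varnothing$; conversely, if some realisation has length $\geq 2$, a short counting/routing argument using the primitivity constant produces arbitrarily long legal words that extend on both sides, so $X_\vartheta \neq \varnothing$ by compactness. Primitivity also gives a single letter $a$ such that every legal word appears inside some $\vartheta^{m}(a)$, and routing sub-blocks of a large realisation of $\vartheta^{m}(a)$ so that it contains many prescribed legal words at once produces, in the limit, a point whose orbit meets every cylinder (dense orbit, hence topological transitivity). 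Finally, if a periodic point ${}^{\infty}w^{\infty}$ exists then every letter occurs in $w$, so for any legal $u \blacktriangleleft \vartheta^{k}(a)$ one may choose a realisation of $\vartheta^{k}(w)$ containing $u$ and periodise it, showing the periodic points are dense whenever they are non-empty.

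The remaining structural dichotomies rest on the same bookkeeping. As a subshift $X_\vartheta$ is compact, metrizable and totally disconnected, so when $X_\vartheta$ is infinite it suffices to rule out isolated points; this follows from the linear-repetitivity statement, since a linearly repetitive point (necessarily aperiodic in the infinite case) meets each of its own cylinders in more than one point, and the derived set---which is closed---then contains a dense subset of $X_\vartheta$, hence equals $X_\vartheta$. Linear repetitivity of a dense family of points is obtained by building points as nested desubstitutions whose growth is pinned down by the primitivity constant, so that the return gaps of any subword are comparable to its length. The minimal-versus-infinitely-many-minimal alternative is a standard argument: a dense orbit together with closure under substitution forces one proper minimal subset to spawn infinitely many distinct ones. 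And a splitting pair yields, at every inflation level, two incompatible completions of a fixed legal word, hence exponentially many legal words of each length in a suitable range, giving $h_{\operatorname{top}}(X_\vartheta) > 0$.

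The main obstacle is the recognisability-type statement (closure under preimages): unlike in the deterministic case $\vartheta$ has no inverse and a given point generally admits several desubstitutions, so one cannot simply invert, and must instead extract a single coherent bi-infinite preimage---and a single coherent offset---from the finitely-many-choices-per-window data by a compactness argument. Once that is in hand, everything else is a matter of iterating the two closure properties in tandem with primitivity.
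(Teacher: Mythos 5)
The paper itself gives no proof of this proposition: it is explicitly a summary of results established in \cite{RS:random} (with a footnote deferring even the definition of splitting pairs to that reference), so your opening move of citing \cite{RS:random} is exactly the paper's approach, and your sketches---compactness of $\mc A^{\Z}$, closure under substitution, a diagonal/compactness extraction for desubstitution, and primitivity-based routing of legal words into high-order inflation words---follow the same lines as the arguments given there. Two auxiliary claims in your sketch are false as stated and would need repair if written out. First, a periodic point of $X_\vartheta$ need not contain every letter of $\mc A$: for the primitive substitution $\vartheta \colon a \mapsto \{aa,ab\},\ b \mapsto \{ba\}$ the point $\cdots aaa\cdots$ lies in $X_\vartheta$ but omits $b$. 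The density of periodic points is instead obtained by taking any legal $u \blacktriangleleft \vartheta^{k}(a)$, using primitivity to find $m$ with $u \blacktriangleleft \vartheta^{k+m}(b)$ for some letter $b$ that \emph{does} occur in the period block $w$, and then applying $\vartheta^{k+m}$ to the periodic point with a periodically repeated choice of realisations; closure under substitution then gives a periodic point of $X_\vartheta$ containing $u$. Second, in the Cantor-set dichotomy your parenthetical that linearly repetitive points are ``necessarily aperiodic in the infinite case'' fails for the same example, where $\cdots aaa \cdots$ is periodic and (trivially) linearly repetitive while $X_\vartheta$ is infinite; so the absence of isolated points cannot be made to rest on that remark and must be argued directly (as in \cite{RS:random}), using primitivity and the dense orbit to perturb any point off any given cylinder. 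Neither slip affects the truth of the proposition, which in any case the paper establishes only by citation, but as written those two steps of your sketch would not go through.
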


We also provide here a new result related to the topological dynamics of RS-subshifts.
First, we need to introduce what it means for a subshift to be substitutive.

\begin{definition}
Let $X$ be a subshift.
An element is called \emph{linearly repetitive} if there exists a real number $L > 0$ such that for all $n \geq 1$ and for all $u \in \mc L^n(X)$, $v \in \mc L^{Ln}(X)$ one has $u \triangleleft v$.
We say that a subspace $Y\subset X$ is \emph{linearly recurrent} if $Y = \overline{\mc O(x)}$ for a linearly repetitive element $x\in X$.

A subspace $Y \subset X$ is called \emph{substitutive} if there exists a primitive deterministic substitution $\dsub$ such that $Y = X_\dsub$.
\end{definition}
All substitutive subshifts are linearly recurrent and all linearly recurrent subshifts are minimal.
The converse implications are false in general.

\begin{prop}\label{PROP:min-subspaces}
Let $\vartheta$ be a primitive random substitution with non-empty RS-subshift $X_\vartheta$.
Either $X_\vartheta$ is substitutive or there are infinitely many distinct substitutive subspaces of $X_\vartheta$.
\end{prop}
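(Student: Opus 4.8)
The plan is to split into two cases according to whether $X_\vartheta$ is minimal or not; by Proposition~\ref{PROP:rs-subs} (applicable since $\vartheta$ is primitive and $X_\vartheta\neq\varnothing$), in the non-minimal case $X_\vartheta$ contains infinitely many distinct minimal subspaces. Both cases rest on the following claim, which I would establish first: \emph{for every word $v_0\in\mc L_\vartheta$ there is a primitive deterministic substitution $\dsub$ on $\mc A$ with $X_\dsub\subseteq X_\vartheta$ and $v_0\in\mc L_\dsub$.}

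To prove the claim, fix a primitivity exponent $k$ for $\vartheta$ and write $v_0\blacktriangleleft\vartheta^m(a_0)$ for some $m\ge 0$ and $a_0\in\mc A$. A routine induction---realising $\vartheta^N(b)=\vartheta^k(\vartheta^{N-k}(b))$ and expanding the first letter of a realisation of $\vartheta^{N-k}(b)$ by means of level-$k$ primitivity---gives $a\blacktriangleleft\vartheta^N(b)$ for all $a,b\in\mc A$ and all $N\ge k$, and applying the same device once more yields $v_0\blacktriangleleft\vartheta^N(b)$ for all $b\in\mc A$ and all $N\ge m+k$. Since $X_\vartheta\neq\varnothing$, the language $\mc L_\vartheta$ is infinite, so the maximal length of a realisation of $\vartheta^N(a)$ is unbounded in $N$ and, being non-decreasing in $N$, tends to infinity; combined with the first observation this shows that for all sufficiently large $N$ and every $a\in\mc A$ some realisation of $\vartheta^N(a)$ uses every letter of $\mc A$ (take a long realisation and refine its blocks one at a time, each refinement being an independent choice). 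Now fix $N$ large. For each $a\in\mc A$ choose a realisation $\tau\overset{\bullet}{=}\vartheta^{N-m-k}(a)$ using every letter, write $\tau=t_1\cdots t_p$ with $p$ large, and build a realisation of $\vartheta^{m+k}(\tau)=\vartheta^{m+k}(t_1)\cdots\vartheta^{m+k}(t_p)\subseteq\vartheta^N(a)$ by choosing the first block to contain $v_0$ (possible by the second observation) and $|\mc A|$ further blocks so that between them every letter of $\mc A$ appears (possible by the first observation); let $\dsub(a)$ be this realisation. Each image of $\dsub$ contains every letter of $\mc A$, so $\dsub$ is primitive; each $\dsub(a)$ is a realisation of $\vartheta^N(a)$, so $\dsub^j(a)\in\vartheta^{jN}(a)$ for every $j$, whence $\mc L_\dsub\subseteq\mc L_\vartheta$ and therefore $X_\dsub\subseteq X_\vartheta$; and $v_0\triangleleft\dsub(a)$ gives $v_0\in\mc L_\dsub$. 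This proves the claim.

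With the claim available, the two cases are quick. If $X_\vartheta$ is minimal, apply the claim to any $v_0\in\mc L_\vartheta$: the subshift $X_\dsub$ is non-empty (being the subshift of a primitive deterministic substitution) and contained in $X_\vartheta$, so $X_\dsub=X_\vartheta$ and $X_\vartheta$ is substitutive. If $X_\vartheta$ is not minimal, suppose it had only finitely many substitutive subspaces $W_1,\dots,W_r$. Each $W_j$ is minimal (every substitutive subshift is minimal) and properly contained in $X_\vartheta$, so $W:=W_1\cup\cdots\cup W_r$ is a proper subshift of $X_\vartheta$: otherwise every minimal subspace of $X_\vartheta$ would coincide with one of the $W_j$, contradicting that $X_\vartheta$ has infinitely many distinct minimal subspaces. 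Hence some $v_0\in\mc L_\vartheta$ occurs in no element of $W$; applying the claim to this $v_0$ gives a substitutive subspace $X_\dsub\subseteq X_\vartheta$ with $v_0\in\mc L_\dsub$, so $X_\dsub$ differs from every $W_j$---a contradiction. Thus $X_\vartheta$ contains infinitely many distinct substitutive subspaces.

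I expect the main obstacle to be the claim, specifically the requirement that a single realisation of $\vartheta^N(a)$ simultaneously contain the prescribed word $v_0$ and display every letter of $\mc A$: because applying $\vartheta$ expands words and does not preserve subwords, the ``payload'' has to be installed at the outermost level of the composition, which is why the preliminary primitivity facts are needed and why one composes $\vartheta^{m+k}$ with a smaller power rather than iterating $\vartheta$ naively. The only other mildly delicate point is the observation that $\mc L_\vartheta$ infinite forces realisation lengths to grow without bound; everything else is routine bookkeeping.
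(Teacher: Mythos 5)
Your argument is correct, and its engine is the same as the paper's: a primitive deterministic sub-substitution of a single large power $\vartheta^N$, built so that each image $\dsub(a)$ is a realisation of $\vartheta^N(a)$ containing every letter and (for the relevant letter) a prescribed legal word, whence $\dsub^j(a)\overset{\bullet}{=}\vartheta^{jN}(a)$ for all $j$, so $X_\dsub\subseteq X_\vartheta$ is substitutive with $v_0\in\mc L_\dsub$. Where you genuinely differ is the outer skeleton. The paper takes a point $w$ with dense orbit (primitivity), notes that the union of the finitely many substitutive subspaces found so far is closed and misses $w$, chooses a long subword $u$ of $w$ whose cylinder $\mc Z_0(u)$ avoids that union, and runs the construction on $u$, producing a new substitutive subspace meeting the cylinder; induction then yields infinitely many. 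You instead invoke the dichotomy ``minimal or infinitely many minimal subspaces'' from Proposition~\ref{PROP:rs-subs}, settle the minimal case directly from your claim ($X_\dsub=X_\vartheta$), and in the non-minimal case reach a contradiction by observing that a finite union of substitutive (hence minimal) subspaces is a proper subshift, so---since subshifts are determined by their languages---some legal word $v_0$ avoids it. Both routes work. Yours isolates a clean reusable lemma (every $\vartheta$-legal word lies in the language of some substitutive subspace) and gives the implication ``minimal $\Rightarrow$ substitutive'' explicitly; the paper's route needs only the dense-orbit bullet of Proposition~\ref{PROP:rs-subs} and in effect reproves the infinitude-of-minimal-subspaces statement in the stronger substitutive form, whereas you take that infinitude as an input from prior work. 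The points you gloss over---that the images of $\dsub$ have length at least $2$ so that $X_\dsub\neq\varnothing$, that $\mc L_\dsub$ coincides with the language of $X_\dsub$ by primitivity, and that \emph{every} letter admits realisations of $\vartheta^N$ of unbounded length (obtained by routing through a letter of genuine growth via primitivity)---are routine and are used just as tacitly in the paper's proof.
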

\begin{proof}
Recall that, as $\vartheta$ is primitive, $X_\vartheta$ contains a point $w$ with dense orbit.
Suppose $X_\vartheta$ is not substitutive.
It is clear, by restricting a suitably large power of the random substitution to a primitive deterministic sub-substitution, that $X_\vartheta$ contains at least one substitutive subspace.
As substitutive subshifts are minimal, and $X_\vartheta$ properly contains a substitutive subspace, it follows that $X_\vartheta$ is not minimal.
Suppose $X_\vartheta$ has at least $n$ minimal subspaces $X_1, \ldots, X_n$.
As $w$ has a dense orbit and $X_\vartheta$ is not minimal, we know that $w \notin X_i$ for any $1 \leq i \leq n$.
As all of the sets $X_i$ are closed, and there are only finitely many, $A = \bigcup_{i=1}^n X_i$ is also a closed set.
It follows that $U = X_\vartheta \setminus A$ is a non-empty open set.
It is now enough to show that the union of the substitutive subspaces of $X_\vartheta$ forms a dense subset.

Let $u \in \mc L_\vartheta$ be a word admitted by $\vartheta$ such that the cylinder set $\mc Z_0(u) = \{x \in X_\vartheta \mid x_{0}\cdots x_{|u|-1} = u\}$ is contained in $U$ and so that $u$ contains every letter in $\mc A$---this can always be done as $u$ can be chosen to be a suitably long subword of $w$ and $w$ contains every letter.
We will show that a substitutive subspace of $X_\vartheta$ exists which intersects $\mc Z_0(u)$ and so cannot be any of the subspaces $X_1, \ldots, X_n$.
As $u$ is admitted by $\vartheta$, there exists a natural number $k\geq 1$ and a letter $a \in \mc A$ such that $u \blacktriangleleft \vartheta^k(a)$.
Let $\hat{u}$ be the particular realisation of $\vartheta^k(a)$ which contains $u$ as a subword.
Without loss of generality, suppose that $k$ is large enough so that every letter $a' \in \mc A$ has a realisation of $\vartheta^k(a')$ which contains every letter in $\mc A$ (if not, using primitivity, we can increase $k$ to a suitably large natural number so that this is the case).
We then define a substitution $\dsub$ on $\mc A$ which maps $a$ to $\hat{u}$ and maps all other letters $a'$ to some realisation which contains every letter in $\mc A$.
The substitution $\dsub$ is then primitive by definition and, by construction, $X_\dsub \subset X_\vartheta$.
It is also the case that $u \in \mc L_\dsub$ as $u \triangleleft \hat{u} = \dsub(a)$.
It follows from primitivity of $\dsub$ that $X_\dsub \cap \mc Z_0(u) \neq \varnothing$.
Hence, $X_\vartheta$ contains $n+1$ substitutive subspaces and so, by induction, contains infinitely many.
\end{proof}

\begin{example}
To illustrate the construction used in the proof of Proposition \ref{PROP:min-subspaces}, let us consider the random Fibonacci substitution of Example \ref{EX:fib}
\[
\vartheta \colon a \mapsto \{ab,ba\}, \: b \mapsto \{a\}.
\]
Recall that the word $aaa$ is $\vartheta$-legal, as $aaa \blacktriangleleft \vartheta^3(a)$. In particular, if we choose the realisation $baaab \overset{\bullet}{=}\vartheta^3(a)$ and arbitrarily choose a realisation for $\vartheta^3(b)$, for instance $aba \overset{\bullet}{=}\vartheta^3(b)$, then the primitive deterministic substitution $\dsub \colon a \mapsto baaab, \: b \mapsto aba$ has an associated subshift $X_\dsub$ which is a substitutive subspace of $X_\vartheta$ and which contains the $\vartheta$-legal word $aaa$ in its language.

By increasing the power $k$ of $\vartheta^k$ and considering all possible globally constant realisations, we produce an infinite family of deterministic substitutions whose subshifts are necessarily substitutive subspaces of $X_\vartheta$ and whose union forms a dense subset of $X_\vartheta$. Note that these substitutive subspaces are neither necessarily distinct, nor exhaustive. For instance, both the usual Fibonacci substitution $\dsub \colon a \mapsto ab, \: b \mapsto a$ and its reverse $\overline{\dsub} \colon a \mapsto ba, \: b \mapsto a$ have the same associated subshifts $X_\dsub = X_{\overline{\dsub}}$, and are both globally constant realisations of the random Fibonacci substitution (with $k=1$) \cite{GM:random-fib}.

For an example of a random substitution where this process certainly does not produce all substitutive subspaces, consider the substitution
\[
\vartheta \colon a \mapsto \{aa,ab,ba,bb\}, \: b \mapsto \{aa,ab,ba,bb\}
\]
whose associated RS-subshift is the full shift $X_\vartheta = \{a,b\}^\Z$, as shown in \cite{RS:random}. It follows that every substitution subshift on two letters is a subspace of $X_\vartheta$, however the only substitutions which can be built by taking a globally constant realisations of $\vartheta^k$ for some $k$ are necessarily of \emph{constant length} (there exists an integer $\ell\geq 2$ such that $|\dsub(a)|=\ell$ for all $a \in \mc A$). In particular, no substitutive subshift whose relative word frequencies are irrationally related can appear this way (such as the Fibonacci substitution) \cite[Sec 5.4]{Q:book}.
\end{example}

\section{Main result}\label{SEC:main}
Let $A$ be an irreducible $0$-$1$ matrix so that $X_{A}$ is a topologically transitive SFT.
As remarked in Section \ref{SEC:sft}, the set of periodic points $\operatorname{Per}(X_{\mc A})$ forms a dense subset of $X_{A}$.
In order to realise $X_{A}$ as an RS-subshift, our strategy will be to construct a random substitution $\vartheta$ such that $\operatorname{Per}(X_{A}) \subseteq X_{\vartheta}$ and in such a way that $\mc L_{\vartheta}$ contains no forbidden words in $\mc F_A$.
That is, we should also have the inclusion $X_{\vartheta} \subseteq X_{A}$.
By density of $\operatorname{Per}(X_{A})$ in $X_{A}$ and the compactness of $X_\vartheta$, it follows that $X_{A} = \overline{\operatorname{Per}(X_{A})} \subseteq \overline{X_{\vartheta}} = X_{\vartheta}$ and hence that $X_\vartheta = X_{A}$.

\begin{definition} \label{DEF:G-rs}
Let $\mc A$ be a finite alphabet and let $G = (V,E)$ be a strongly connected graph with $V = \{ v_a \mid a \in \mc A\}$. Given a cycle $\gamma = ((v_{a_1}, v_{a_2}), (v_{a_2}, v_{a_3}), \ldots, (v_{a_\ell}, v_{a_1}))$ in $G$, the \emph{word read} $u(\gamma) \in \mc A^+$ of $\gamma$ is the word $a_1 a_2 \cdots a_\ell a_1$, given by reading the vertices of $G$ traversed by the cycle $\gamma$ from the root back to itself.
For every letter $a \in \mc A$, let $C_a = \{\gamma_1^a, \ldots, \gamma_{k_a}^a\}$ be the set of simple cycles in $G$ with root $v_a$.
We let $\vartheta_G \colon \mc A \to \mc S(\mc A^+)$ denote the \emph{cycle-substitution} associated with $G$, given by
\[
\vartheta_G \colon a \mapsto \{a\} \cup \{u(\gamma) \mid \gamma \in C_a\}.
\]
\end{definition}

\begin{remark}
As every simple cycle has length at most $n = |\mc A|$, $C_a$ is finite.
So $\vartheta_G$ is a well-defined random substitution.
\end{remark}

\begin{lemma}\label{LEM:subcycle_by_subst}
Let $G$ be a strongly connected graph, and let $\gamma$ be a cycle in $G$ that is not simple.
Suppose $\gamma_0$ is a simple subcycle of $\gamma$, and $\tilde{\gamma}$ is the cycle given by removing $\gamma_0$ from $\gamma$.
Then, $u (\gamma)\overset{\bullet}{=} \vartheta_G(u(\tilde{\gamma}))$.
\end{lemma}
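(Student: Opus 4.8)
The plan is to exhibit $u(\gamma)$ explicitly, letter by letter, as a concatenation of realisations of $\vartheta_G$ applied to the letters of $u(\tilde{\gamma})$. Write $\gamma = (e_1, \ldots, e_\ell)$ and let $w_0, w_1, \ldots, w_\ell$ be the sequence of vertices traversed, so that $w_{m-1} = s(e_m)$, $w_m = t(e_m)$, and $w_0 = w_\ell$ is the root; writing $w_m = v_{b_m}$, we have $u(\gamma) = b_0 b_1 \cdots b_\ell$ with $b_0 = b_\ell$. In the sense of Lemma \ref{LEM:simple-cycles}, the simple subcycle is a contiguous block $\gamma_0 = (e_p, \ldots, e_q)$ of $\gamma$ for some $1 \leq p \leq q \leq \ell$, and removing it leaves $\tilde{\gamma} = (e_1, \ldots, e_{p-1}, e_{q+1}, \ldots, e_\ell)$. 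Since $\gamma_0$ is a cycle, $s(e_p) = t(e_q)$, that is $w_{p-1} = w_q$; denote this common vertex $v_c$, so $b_{p-1} = b_q = c$.

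First I would read off the two words involved. The subcycle $\gamma_0$ traverses the vertices $w_{p-1}, w_p, \ldots, w_q$, so $u(\gamma_0) = c\, b_p b_{p+1} \cdots b_{q-1}\, c$; since $\gamma_0$ is a simple cycle rooted at $v_c$ it lies in $C_c$, and hence $u(\gamma_0) \in \vartheta_G(c)$ by Definition \ref{DEF:G-rs}. The cycle $\tilde{\gamma}$ traverses the vertices $w_0, \ldots, w_{p-1}\,(=w_q), w_{q+1}, \ldots, w_\ell$, so $u(\tilde{\gamma}) = b_0 b_1 \cdots b_{p-2}\, c\, b_{q+1} \cdots b_\ell$, where the displayed $c$ is the letter contributed by the vertex $w_{p-1} = w_q$.

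Next I would build the realisation: apply $\vartheta_G$ to $u(\tilde{\gamma})$ by choosing, for the distinguished occurrence of $c$, the realisation $u(\gamma_0) \in \vartheta_G(c)$, and for every other letter $a$ the trivial realisation $a \in \vartheta_G(a)$, which is always available since $\{a\} \subseteq \vartheta_G(a)$. Concatenating these yields
\[
b_0 b_1 \cdots b_{p-2}\cdot\bigl(c\, b_p \cdots b_{q-1}\, c\bigr)\cdot b_{q+1} \cdots b_\ell = b_0 b_1 \cdots b_{p-2}\, b_{p-1}\, b_p \cdots b_{q-1}\, b_q\, b_{q+1} \cdots b_\ell = u(\gamma),
\]
where we used $c = b_{p-1}$ at the left end of the inserted block and $c = b_q$ at its right end. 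This shows $u(\gamma)$ is a realisation of $\vartheta_G(u(\tilde{\gamma}))$, i.e. $u(\gamma) \overset{\bullet}{=} \vartheta_G(u(\tilde{\gamma}))$. The only point requiring care is the bookkeeping at the ends: if $p = 1$ or $q = \ell$ the corresponding empty index block is simply dropped and the vertex $v_c$ then happens to be the root of $\gamma$ as well, but the same computation goes through; moreover $p = 1$ and $q = \ell$ cannot both hold, since then $\gamma_0 = \gamma$ would fail to be simple, so $\tilde{\gamma}$ is genuinely a non-empty cycle. This index bookkeeping, rather than any substantive difficulty, is the main obstacle.
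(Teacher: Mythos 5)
Your proof is correct and follows essentially the same route as the paper's: identify the contiguous simple subcycle, note that its word read lies in $\vartheta_G(c)$ because $\gamma_0\in C_c$, and realise $\vartheta_G$ on $u(\tilde{\gamma})$ as the identity on every letter except the distinguished occurrence of $c$, which is sent to $u(\gamma_0)$. The only difference is cosmetic (your explicit treatment of the boundary cases $p=1$, $q=\ell$, which the paper leaves implicit).
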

\begin{proof}
Suppose $\gamma = ((v_{a_0}, v_{a_1}), (v_{a_1}, v_{a_2}), \ldots, (v_{a_\ell}, v_{a_0}))$ and $\gamma_0 = ((v_{a_i},v_{a_{i+1}}), \ldots, (v_{a_{j-1}},v_{a_j}))$, with $a_i = a_j$.
Then, $\tilde{\gamma} = ((v_{a_0}, v_{a_1}), \ldots, (v_{a_{i-1}}, v_{a_i}),  (v_{a_i}, v_{a_{j+1}}), \ldots (v_{a_{\ell}}, v_{a_0}))$.

The corresponding word reads are given by $u(\gamma) = a_0 a_1 \cdots a_{\ell} a_0$, $u(\gamma_0) = a_i \cdots a_j$, and finally $u (\tilde{\gamma}) = a_0 \cdots a_i a_{j+1} \cdots a_{\ell} a_0$.
Since $\gamma_0$ is simple with root in $a_i$, it is contained in $C_{a_i}$, thus $u (\gamma_0) \overset{\bullet}{=} \vartheta_G(a_i)$.
We choose a specific realisation $\vartheta_G^R$ of $\vartheta_G$ on $u(\tilde{\gamma})$ by: 
\[
\vartheta_G^R(u(\tilde{\gamma})_k) = 
\begin{cases}
u(\tilde{\gamma})_k, & k \neq i,
\\u(\gamma_0), & k = i.
\end{cases}
\]
With this choice, $\vartheta_G^R(u(\tilde{\gamma})) = a_0 \cdots a_{i-1} a_i \cdots a_j a_{j+1} \cdots a_{\ell} a_0 = u(\gamma)$, hence $u (\gamma)\overset{\bullet}{=} \vartheta_G(u(\tilde{\gamma}))$. 
\end{proof}

\begin{theorem}\label{THM:main}
Let $A$ be an irreducible $0$-$1$ matrix with associated SFT $X_{A}$ over the alphabet $\mc A$.
There exists a primitive random substitution $\vartheta \colon \mc A \to \mc S(\mc A^+)$ with associated RS-subshift $X_\vartheta$ such that $X_{\vartheta} = X_{A}$.
\end{theorem}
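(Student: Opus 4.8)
The plan is to take $\vartheta = \vartheta_{G_A}$, where $G_A = (V,E)$ is the graph with vertex set $V = \{v_a \mid a \in \mc A\}$ and adjacency matrix $A$, and $\vartheta_{G_A}$ is the cycle-substitution of Definition~\ref{DEF:G-rs}. Since $A$ is irreducible, $G_A$ is strongly connected, so $\vartheta_{G_A}$ is a well-defined random substitution. Following the strategy outlined before Definition~\ref{DEF:G-rs}, it suffices to establish three facts: (i) $\vartheta_{G_A}$ is primitive; (ii) $X_{\vartheta_{G_A}} \subseteq X_A$; and (iii) $\operatorname{Per}(X_A) \subseteq X_{\vartheta_{G_A}}$. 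Granting these, since $A$ irreducible makes $X_A$ topologically transitive (hence $\operatorname{Per}(X_A)$ dense in $X_A$) and since $X_{\vartheta_{G_A}}$ is closed, we get $X_A = \overline{\operatorname{Per}(X_A)} \subseteq \overline{X_{\vartheta_{G_A}}} = X_{\vartheta_{G_A}} \subseteq X_A$, so $X_{\vartheta_{G_A}} = X_A$.

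The engine behind (i) and (iii) is the claim that for every cycle $\gamma$ rooted at a vertex $v_a$ there is an $N \geq 1$ with $u(\gamma) \overset{\bullet}{=} \vartheta_{G_A}^N(a)$. Indeed, Proposition~\ref{PROP:cycle-decomp} gives simple cycles $\gamma_0,\ldots,\gamma_k$ and cycles $\tilde\gamma_0 = \gamma, \ldots, \tilde\gamma_{k+1}$ with $\tilde\gamma_{k+1}$ simple, where removing $\gamma_i$ from $\tilde\gamma_i$ yields $\tilde\gamma_{i+1}$; each removal leaves the root at $v_a$, and Lemma~\ref{LEM:subcycle_by_subst} gives $u(\tilde\gamma_i) \overset{\bullet}{=} \vartheta_{G_A}(u(\tilde\gamma_{i+1}))$, so composing these realisations yields $u(\gamma) \overset{\bullet}{=} \vartheta_{G_A}^{k+1}(u(\tilde\gamma_{k+1}))$; finally $\tilde\gamma_{k+1} \in C_a$ gives $u(\tilde\gamma_{k+1}) \in \vartheta_{G_A}(a)$, so $N = k+2$ works. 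For (i): given $a,b \in \mc A$, Lemma~\ref{LEM:cycles} supplies a cycle $\gamma$ rooted at $v_a$ that traverses $v_b$, so $b \blacktriangleleft u(\gamma) \overset{\bullet}{=} \vartheta_{G_A}^N(a)$; since $\{a\} \subseteq \vartheta_{G_A}(a)$ the sets $\vartheta_{G_A}^m(a)$ increase with $m$, so taking $k$ to be the maximum of the finitely many exponents $N$ arising over all pairs $(a,b)$ gives $b \blacktriangleleft \vartheta_{G_A}^k(a)$ for all $a,b$ at once.

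For (ii) I would prove by induction on $k$ that every realisation in $\vartheta_{G_A}^k(a)$ is the word read of a closed walk in $G_A$ based at $v_a$ (the trivial walk for the single letter $a$). The cases $k=0,1$ are immediate from the definition of $\vartheta_{G_A}$. For the inductive step, suppose $b_0 b_1 \cdots b_m \in \vartheta_{G_A}^k(a)$ reads a closed walk at $v_a$, so that $(v_{b_i}, v_{b_{i+1}}) \in E$ for each $i$, and pick realisations $w_i \in \vartheta_{G_A}(b_i)$, each reading a closed walk at $v_{b_i}$. Then $w_0 w_1 \cdots w_m$ reads a walk: inside each block $w_i$ this holds by hypothesis, and the junction between $w_i$ and $w_{i+1}$ is precisely the pair $b_i b_{i+1}$, an edge of $G_A$. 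Hence every $\vartheta_{G_A}$-legal word reads a walk in $G_A$; in particular, if $x \in X_{\vartheta_{G_A}}$ then each pair $x_i x_{i+1}$ reads an edge, so $x \in \widehat{X}_{G_A} = X_A$.

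For (iii), let $x \in \operatorname{Per}(X_A)$ have period $p$ and set $w = x_0 x_1 \cdots x_{p-1}$; the edges $(v_{x_i}, v_{x_{i+1}})$ (indices mod $p$) form a cycle $\gamma$ rooted at $v_{x_0}$, and its $n$-fold traversal $\gamma^n$ is again a cycle rooted at $v_{x_0}$ with $u(\gamma^n) = w^n x_0$. By the claim above, $u(\gamma^n) \overset{\bullet}{=} \vartheta_{G_A}^{N_n}(x_0)$, so $w^n \in \mc L_{\vartheta_{G_A}}$ for every $n$; as every finite subword of $x$ is, by periodicity, a subword of some $w^n$, all finite subwords of $x$ are $\vartheta_{G_A}$-legal, i.e.\ $x \in X_{\vartheta_{G_A}}$ (this also shows $X_{\vartheta_{G_A}} \neq \varnothing$). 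I expect the main obstacle to be part~(ii): one must be sure that applying $\vartheta_{G_A}$ independently to each letter never introduces a transition forbidden by $A$, and the point is exactly that every element of $C_a$ is a cycle returning to its root $v_a$, so that the junction between two consecutive inflated blocks reproduces an already-legal transition of the word being substituted — the induction is what turns this local observation into the global inclusion.
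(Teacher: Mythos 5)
Your proposal is correct and follows essentially the same route as the paper: the cycle-substitution $\vartheta_{G_A}$, the decomposition of Proposition~\ref{PROP:cycle-decomp} combined with Lemma~\ref{LEM:subcycle_by_subst} to realise word reads of cycles as iterated inflation words of the root letter (which yields both legality of periodic words and, via Lemma~\ref{LEM:cycles}, primitivity), and density of periodic points plus closedness of $X_\vartheta$ to turn the two inclusions into equality. Your explicit induction for $X_{\vartheta_{G_A}} \subseteq X_A$ and the $n$-fold traversal argument for periodic points are simply more detailed renderings of the paper's boundary-preservation and extension-to-a-periodic-block remarks.
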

\begin{proof}

Let $\vartheta:= \vartheta_{G_A}$, with $G_A$ the strongly connected graph associated to the matrix $A$. We first show that $X_{\vartheta} = X_{A}$. 
We note that for all $a \in \mc A$, and all realisations of $\vartheta(a)$, no forbidden word in $\mc F_A$ appears as a subword of $\vartheta(a)$ (because every word $\vartheta(a)$ corresponds to a path in $G_{A}$).
As it is also the case that every realisation of $\vartheta(a)$ is either of the form $a$ or $aua \overset{\bullet}{=} \vartheta(a)$ for some $u$, it follows that no forbidden word in $\mc F_A$ can appear as a subword of a realisation of an iterated substituted letter $\vartheta^k(a)$ for all $k$.
This is because forbidden words cannot be created at the boundaries of concatenated substituted letters, as boundaries between letters remain fixed under substitution for all realisations.
We hence have the inclusion $X_\vartheta \subseteq X_{A}$.

Let $w \in X_{A}$ be a periodic point.
So there exists an $\ell \geq 1$ such that $\sigma^\ell(w) = w$.
Let $w_0 \cdots w_{\ell-1}$ be a periodic block of $w$.
The legal word $v = w_0 \cdots w_{\ell - 1} w_0$ corresponds to a cycle $\gamma_v$ in the graph $G_{A}$.
More precisely, $v = u(\gamma_v)$ for some $\gamma_v$ in $G_{A}$.
Due to Proposition \ref{PROP:cycle-decomp}, there are cycles $\gamma_0, \ldots, \gamma_k$ and $\tilde{\gamma_0}, \ldots \tilde{\gamma}_{k+1}$ such that $\gamma_v = \tilde{\gamma}_0$, all the $\gamma_i$ and $\tilde{\gamma}_{k+1}$ are simple and $\tilde{\gamma}_{j+1}$ is given by removing $\gamma_j$ from $\tilde{\gamma}_{j}$ for all $j \in \{1,\ldots,k\}$.
By Lemma \ref{LEM:subcycle_by_subst}, that implies $u(\tilde{\gamma}_{j}) \overset{\bullet}{=} \vartheta (u(\tilde{\gamma}_{j+1}))$ for all $j \in \{1,\ldots,k\}$.
A simple iteration of this relation yields $u(\gamma_v) = u(\tilde{\gamma}_0) \overset{\bullet}{=} \vartheta^{k+1}(\tilde{\gamma}_{k+1})$.
Since $\tilde{\gamma}_{k+1}$ is a simple cycle, we have $u(\tilde{\gamma}_{k+1}) \overset{\bullet}{=} \vartheta(a)$ for some $a \in \mc A$ by the definition of $\vartheta$.
It follows that $u(\gamma_v) \overset{\bullet}{=} \vartheta^{k+2}(a)$.

This shows that $v = u(\gamma_v)$ is an admitted word for $\vartheta$ and so $\mc L(\operatorname{Per}(X_{A})) \subset \mc L_\vartheta$. By the density of $\operatorname{Per}(X_{A})$ in $X_{A}$, every legal word in $\mc L(X_{A})$ can be extended to a legal periodic block, so it follows that $\mc L(X_{A}) \subseteq \mc L_\vartheta$ and so $X_{A} \subseteq X_\vartheta$.

It remains to be shown that $\vartheta_{G_A}$ is primitive.
Let $a,b$ be letters in $\mc A$ and let $\gamma_{ab}$ be a cycle rooted at $a$ which passes through $b$.
Such a cycle exists by Lemma \ref{LEM:cycles}.
By the above, there exists an integer $k_{ab} \geq 0$ and a letter $c \in \mc A$ such that $u(\gamma_{ab}) \overset{\bullet}{=} \vartheta^{k_{ab}}(c)$.
Note that $b \triangleleft u(\gamma_{ab})$ by construction of the cycle $\gamma_{ab}$ and the definition of the word read of a path.
Of course, $c$ must actually be the letter $a$, as $\gamma_{ab}$ is rooted at $a$ so the end letters of its word read $u(\gamma_{ab})$ are both $a$, and the substitution $\vartheta$ fixes the end letters of words.
Hence, $b \blacktriangleleft \vartheta^{k_{ab}}(a)$.
As we always have $a \blacktriangleleft \vartheta^i(a)$ for all $i \geq 1$ and $a \in \mc A$, it follows that we also have $b \blacktriangleleft \vartheta^{k_{ab} + i}(a)$ for all $i \geq 1$.
Let $k = \max \{k_{ab} \mid a,b \in \mc A\}$.
By construction then, for all $a,b$ we have $b \blacktriangleleft \vartheta^k(a)$ and so $\vartheta$ is primitive. 
\end{proof}
\begin{coro}\label{COR:main}
Let $X$ be a topologically transitive SFT over the alphabet $\mc A$.
There exists a primitive random substitution $\vartheta$ (on a possibly different alphabet) with associated RS-subshift $X_\vartheta$ such that $X_{\vartheta}$ is topologically conjugate to $X_{A}$.
\end{coro}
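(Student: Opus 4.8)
The plan is to deduce the corollary from Theorem~\ref{THM:main} by reducing an arbitrary topologically transitive SFT to a vertex shift on a strongly connected graph; the one point that genuinely needs checking is that this reduction preserves topological transitivity, so that the irreducibility hypothesis of Theorem~\ref{THM:main} becomes available. First I would use the standard fact that every SFT is topologically conjugate to a vertex shift: writing $X = X_{\mc F}$ for a finite set $\mc F$ of forbidden words over $\mc A$, a higher power presentation \cite[Prop.~2.3.9]{LM:introduction-to-symbolic} yields a topologically conjugate SFT $X_{\mc F'}$ over some alphabet $\mc A'$ with $\mc F' \subseteq (\mc A')^2$, and, unwinding the definitions, $X_{\mc F'}$ is precisely the vertex shift $\widehat{X}_G$ of the graph $G$ with vertex set $\mc A'$ and edge set $E = (\mc A')^2 \setminus \mc F'$. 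Removing from $G$ every vertex with no incoming edge or no outgoing edge, and iterating until none remain, leaves $\widehat{X}_G$ unchanged, since no such vertex can occur among the coordinates of an element of $\widehat{X}_G$; hence we may assume $G$ is essential, with $\widehat{X}_G$ still topologically conjugate to $X$.

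Next I would transport the hypothesis across the conjugacy. Topological transitivity is a conjugacy invariant, so $\widehat{X}_G$ is topologically transitive, and writing $A$ for the adjacency matrix of $G$ we have $\widehat{X}_G = X_A$ in the notation of Section~\ref{SEC:sft}. By the Proposition in that section, relating irreducibility of $A$, strong connectedness of $G$ and topological transitivity of $X_A$, the matrix $A$ is irreducible. Theorem~\ref{THM:main} now applies to $A$ and furnishes a primitive random substitution $\vartheta$ on $\mc A'$ with $X_\vartheta = X_A = \widehat{X}_G$; since $\widehat{X}_G$ is topologically conjugate to $X$, this $\vartheta$, on the (possibly larger) alphabet $\mc A'$, is as required.

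There is no substantial obstacle here: the corollary is a formal consequence of Theorem~\ref{THM:main} together with the standard structure theory of one-dimensional SFTs. The only subtlety worth recording in the write-up is that the higher power presentation used to pass from $X_{\mc F}$ to a vertex shift inherits topological transitivity from $X$ — which is immediate, as it is conjugate to $X$ — since without this observation one could not legitimately invoke the irreducibility hypothesis of Theorem~\ref{THM:main}.
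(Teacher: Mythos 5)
Your argument is correct and follows essentially the same route as the paper: the paper simply cites the standard fact (via \cite[Prop.~2.3.9]{LM:introduction-to-symbolic}) that every topologically transitive SFT is conjugate to $X_A$ for an irreducible $0$-$1$ matrix $A$ and then applies Theorem~\ref{THM:main}, whereas you unpack that citation by passing to a vertex-shift presentation, trimming to an essential graph, and transporting transitivity across the conjugacy to get irreducibility. This added detail is fine but not a different proof.
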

\begin{proof}
It is well known that every topologically transitive SFT can be rewritten up to topological conjugacy as an SFT associated to an irreducible $0$-$1$ matrix \cite[Prop.~2.3.9]{LM:introduction-to-symbolic}, and so the result follows from a direct application of Theorem \ref{THM:main}.
\end{proof}

\begin{remark}
We note that Theorem \ref{THM:main} and Corollary  \ref{COR:main} are constructive:
Given a topologically transitive SFT $X_{\mc F}$, construct a topologically conjugate SFT $X_A$ (in the standard way) whose associated transition matrix $A$ is irreducible with entries in $\{0,1\}$.
One then forms the directed graph $G_A$ and can explicitly read off the associated random substitution $\vartheta_G$.
Note, however, that $\vartheta_G$ is far from being unique in this respect.
In any given example, there will almost certainly be a `simpler' random substitution that realises $X_{\mc F}$ as the corresponding RS-subshift.
\end{remark}

In order to illustrate the general construction, we provide an example.
\begin{example}\label{EX:mickey}
Let $\mc A = \{0,1,2,3\}$ and consider the following irreducible $0$-$1$ matrix.
\[
A =
\begin{bmatrix}
1 & 1 & 0 & 0 \\
0 & 0 & 1 & 1 \\
1 & 1 & 0 & 0 \\
0 & 1 & 0 & 0
\end{bmatrix}
\]
which is irreducible and thus corresponds to a topologically transitive SFT $X_A$.
The corresponding graph $G := G_{A}$ associated with $A$ is shown in Figure \ref{FIG:graph-mickey}.
We drop at this point the formal distinction between a letter and the corresponding vertex to ease readability.
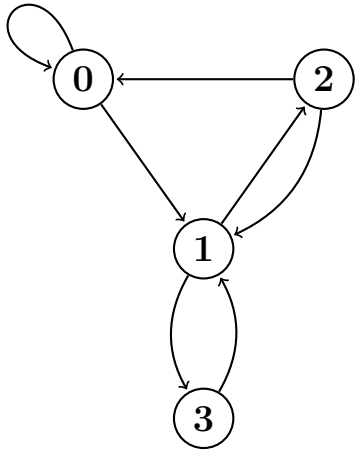
\begin{figure}[h]
\begin{tikzpicture}[->, shorten >=1pt, thick, main node/.style={circle,draw,font=\Large\bfseries}, scale = 0.8]

\node[main node](1) at (0,0) {0};
\node[main node](2) at (2,-2.82) {1};
\node[main node](3) at (4,0) {2};
\node[main node](4) at (2,-5.64) {3};

\draw (1) to[out = 110, in = 160, looseness=10] (1);
\draw (1) to (2);
\draw (2) to (3);
\draw (3) to[bend left] (2);
\draw (3) to (1);
\draw (2) to[bend right] (4);
\draw (4) to[bend right] (2);

\end{tikzpicture}
\caption{Graph $G_{A}$ of the SFT $X_{A}$ in Example \ref{EX:mickey}.}
\label{FIG:graph-mickey}
\end{figure}

The cycle-substitution associated with $G$ is given by
\[
\vartheta_{G} \colon
\begin{cases}
0 \mapsto & \{0, 00, 0120\},\\
1 \mapsto & \{1, 1201, 121, 131\},\\
2 \mapsto & \{2, 212, 2012\},\\
3 \mapsto & \{3, 313\}.
\end{cases}
\]
Note that, by construction, the set of paths corresponding to the words $\vartheta(a) \neq a$ for all $a \in \mc A$ are precisely the simple cycles in $G$ with root at the vertex $a$. 

With the same arguments as provided in the proof of Theorem \ref{THM:main} (no forbidden words can appear within or at the boundaries of inflation words), we easily observe the inclusion $X_\vartheta \subseteq X_{A}$.

We illustrate by an example how to iteratively construct a periodic word in $X_{A}$ via $\vartheta_{G}$. Consider the word $v = 213120012$, corresponding to a cycle $\gamma$ in $G_{A}$.
The bi-infinite sequence $\omega = (v')^\infty = (21312001)^\infty$, is therefore a periodic point in $X_{A}$.
Our aim is to show that $v'$ is a legal word for the random substitution $\vartheta$.
First, we identify a simple subcycle of $\gamma$, for example the one whose word read is $131$.
Since $131 \overset{\bullet}{=} \vartheta(1)$ and $i \overset{\bullet}{=} \vartheta(i)$ for $i \in \{0,1,2,3 \}$, we have $v = 213120012 \overset{\bullet}{=} \vartheta(2\dot{1}20012)$ (where we identify with a dot $\dot{\quad}$ a letter which is to be substituted non-trivially).
Iterating this procedure, we find
\[
v = 213120012 \overset{\bullet}{=} \vartheta(2\dot{1}20012) \overset{\bullet}{=} \vartheta^2(\dot{2}0012) \overset{\bullet}{=} \vartheta^3(2\dot{0}12) \overset{\bullet}{=} \vartheta^4(\dot{2})
\]
This yields $v \overset{\bullet}{=} \vartheta^{4}(2)$, showing that $v'$ is $\vartheta$-legal.
As a similar process can be followed to show that $(v')^k$ is legal for any $k$, it follows that $\omega = (v')^\infty$ is an element of $X_\vartheta$.
\end{example}

\begin{definition}
Let $\alpha \geq 1$ be a real algebraic number.
If all other algebraic conjugates of $\alpha$ are strictly smaller in absolute value than $|\alpha|$, then we call $\alpha$ a \emph{Perron number}.
\end{definition}
Lind \cite{L:perron-numbers} showed that $\alpha$ is a Perron number if and only if there exists a primitive integer matrix whose Perron--Frobenius eigenvalue is $\alpha$.
As every primitive matrix $A$ corresponds to a topologically mixing (hence transitive) shift of finite type $X_{\mc F}$ whose topological entropy $h_{\operatorname{top}}$ is given by $h_{\operatorname{top}}(X_{\mc F}) = \log \alpha$ where $\alpha$ is the Perron--Frobenius eigenvalue of $A$, we have the following simple corollary of Theorem \ref{THM:main}.

\begin{coro} \label{COR:Perron-numbers}
For every Perron number $\alpha$, there exists a primitive random substitution $\vartheta$ whose associated RS-subshift $X_\vartheta$ is topologically mixing and whose topological entropy $h_{\operatorname{top}}(X_\vartheta)$ is given by
\[
h_{\operatorname{top}}(X_\vartheta) = \log (\alpha).
\]
\end{coro}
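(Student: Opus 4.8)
The plan is to combine Lind's characterisation of Perron numbers with Theorem \ref{THM:main}, the only real work being the passage from an arbitrary primitive non-negative integer matrix to a primitive $0$-$1$ matrix with the same spectral radius and the same mixing behaviour.

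First I would invoke Lind's theorem \cite{L:perron-numbers}: since $\alpha$ is a Perron number, there is a primitive non-negative integer matrix $M$ whose Perron--Frobenius eigenvalue is $\alpha$. As $M$ may have entries larger than $1$ it cannot be fed directly into Theorem \ref{THM:main}, so I would recode. Interpret $M$ as the adjacency matrix of a finite directed graph $H$ with multiple edges ($M_{ij}$ edges from $i$ to $j$) and pass to the $2$-block presentation of the edge shift $X_H$; this is a vertex shift whose transition matrix $A$ is the $0$-$1$ edge matrix of $H$, i.e.\ $A_{ef} = 1$ exactly when $t(e) = s(f)$. The classical facts collected in \cite[Ch.~2]{LM:introduction-to-symbolic} then give everything needed: $A$ and $M$ have the same nonzero spectrum, in particular the same Perron--Frobenius eigenvalue $\alpha$; $A$ is again primitive, since $M^N > 0$ for some $N$ (together with $H$ being essential) forces $A^m > 0$ for all $m \geq N+1$; and $X_A$ is topologically conjugate to $X_H$, hence topologically mixing with $h_{\operatorname{top}}(X_A) = \log\alpha$. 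This is precisely the reduction already taken for granted in the paragraph preceding the statement.

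Next I would apply Theorem \ref{THM:main} to the primitive (hence irreducible) $0$-$1$ matrix $A$. This produces a primitive random substitution $\vartheta$ over the vertex alphabet of $A$ with $X_\vartheta = X_A$ on the nose. Since $X_\vartheta$ and $X_A$ are literally the same subshift, topological entropy and topological mixing transfer verbatim: $h_{\operatorname{top}}(X_\vartheta) = h_{\operatorname{top}}(X_A) = \log\alpha$ and $X_\vartheta$ is topologically mixing, as required.

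There is no genuine obstacle here, so this is a corollary in the literal sense; the only point needing a moment's care is the recoding in the second step, which is entirely standard. If one prefers, the argument can instead be routed through Corollary \ref{COR:main}, applied to any topologically transitive SFT realising entropy $\log\alpha$, at the cost of obtaining $X_\vartheta$ only up to topological conjugacy -- but since both topological entropy and topological mixing are conjugacy invariants this is equally sufficient.
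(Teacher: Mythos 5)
Your proposal is correct and follows essentially the same route as the paper, which likewise deduces the corollary from Lind's characterisation of Perron numbers together with the fact that a primitive integer matrix yields a topologically mixing SFT of entropy $\log\alpha$, and then applies Theorem \ref{THM:main} (via Corollary \ref{COR:main}). The only difference is that you spell out the standard edge-shift/$2$-block recoding from a primitive matrix with entries possibly exceeding $1$ to a primitive $0$-$1$ matrix with the same Perron--Frobenius eigenvalue, a step the paper leaves implicit by citing \cite[Prop.~2.3.9]{LM:introduction-to-symbolic}.
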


Theorem \ref{THM:main} also suggests a rich structure of minimal subspaces for $X_A$.
In particular, every primitive substitution given by taking a particular realisation of $\vartheta_{G_A}^k$ for some $k$ corresponds to a minimal subspace of $X_{\vartheta_{G_A}} = X_A$.
The union of such minimal sets provides a dense subset of $X_A$ \cite{RS:random} and so we can `internally approximate' SFTs by successive substitutive subspaces.

\subsection{Edge Shifts}
Often, edge shifts are a more convenient way of representing SFTs, rather than vertex shifts.
For a detailed introduction to edge shifts, we suggest the text by Lind and Marcus \cite{LM:introduction-to-symbolic}.
An edge shift is similar to a vertex shift, except one uniquely labels the edges of a directed graph $G$ (where we now allow multiple edges between vertices) and then the corresponding alphabet is given by the set of edge-labels $E$.
A bi-infinite path in $G = (V,E)$ then corresponds to the bi-infinite sequence in the edge shift $X_G$ given by reading the sequence of edges traversed by the path (we use a hat $\widehat{\quad}$ to distinguish between vertex shifts $\widehat{X}_G$ and edge shifts $X_G$),
\[
X_G = \{ x \in E^\Z \mid t(x_i) = s(x_{i+1})\}.
\]
As with vertex shifts, $G$ is strongly connected if and only if the edge shift $X_G$ is topologically transitive and every SFT is topologically conjugate to an edge shift on some finite directed graph $G$.

Up to topological conjugacy, it makes little difference if vertex shifts or edge shifts are used when rewriting a transitive SFT as a primitive RS-subshift, although it may be more convenient in some circumstances to use the edge shift representation.
The only difference is that one needs to use a slightly modified definition of a simple cycle, whereby an \emph{edge-wise simple cycle} is given by a cycle that traverses each \emph{edge} at most once, rather than each vertex.
Likewise, a modified version of Lemma \ref{LEM:simple-cycles} and Proposition \ref{PROP:cycle-decomp} are needed, allowing one to decompose cycles into nested edge-wise simple subcycles.
The word read $u(\gamma)$ of a cycle $\gamma$ is then as usual, except one reads the edges traversed by $\gamma$ rather than vertices, and one includes the first edge at both the beginning and end of the word read.
So, if $\gamma = (e_1, \ldots, e_\ell)$ is a cycle, then $u(\gamma) = e_1\cdots e_\ell e_1$.
The \emph{edge-wise cycle-substitution} is then again given by mapping
\[
\vartheta_G \colon a \mapsto \{a\} \cup \{u(\gamma) \mid \gamma \in C_a\}
\]
where $C_a$ is now the set of edge-wise simple cycles in $G$ whose first edge is $a$.

Rather than rigorously outlining this perspective, we instead provide an example computation and invite the interested reader to complete the necessary formalities.
\begin{example}\label{EX:edge-shift}
Let $G$ be given by the labelled directed graph in Figure \ref{FIG:graph-mickey2}.
\begin{figure}[h]
\begin{tikzpicture}[shorten >=0.5pt]

\node[fill, circle](1) at (0,0) {};
\node[fill, circle](2) at (2,3.4) {};
\node[fill, circle](3) at (4,0) {};

\draw[->, thick, bend left, left] (1) to node {$0$} (2);
\draw[->, thick, bend right, left] (1) to node {$1$} (2);
\draw[->, thick, bend left, right] (2) to node {$2$} (3);
\draw[->, thick, bend left, below] (3) to node {$3$} (1);
\draw[->, thick, out = 10, in = 170, above] (1) to node {$4$} (3);
\draw[->, thick, out = -40, in = 40, looseness=20, right] (3) to node {$5$} (3);

\end{tikzpicture}
\caption{Graph $G$ with labelled edges for Example \ref{EX:edge-shift}.}
\label{FIG:graph-mickey2}
\end{figure}
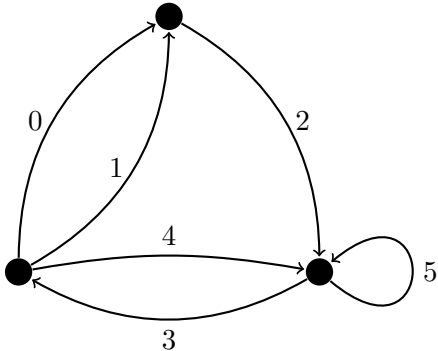
The set of edge-wise simple cycles beginning with the edge $0$ is given by $C_{0} = \{(0,2,3), (0,2,5,3)\}$ because any other tuple of edges either does not describe an admitted cycle in $G$ beginning with the edge $0$, or else contains an appearance of some edge more than once.
Likewise, we have
\[
\begin{array}{lcl}
C_0 & = & \{(0,2,3), (0,2,5,3)\}, \\
C_1 & = & \{(1,2,3), (1,2,5,3)\}, \\
C_2 & = & \{(2,3,0), (2,3,1), (2,5,3,0), (2,5,3,1)\}, \\
C_3 & = & \{(3,0,2), (3,0,2,5), (3,1,2), (3,1,2,5), (3,4), (3,4,5)\}, \\
C_4 & = & \{(4,3), (4,5,3)\}, \\
C_5 & = & \{(5), (5,3,0,2), (5,3,1,2), (5,3,4)\}.
\end{array}
\]
The corresponding cycle-substitution associated with $G$ is given by
\[
\vartheta:= \vartheta_{G} \colon
\begin{cases}
0 \mapsto & \{0, 0230, 02530\},\\
1 \mapsto & \{1, 1231, 12531\},\\
2 \mapsto & \{2, 2302, 2312, 25302, 25312\},\\
3 \mapsto & \{3, 3023, 30253, 3123, 31253, 343, 3453\},\\
4 \mapsto & \{4, 434, 4534\},\\
5 \mapsto & \{5, 55, 53025, 53125, 5345\} .
\end{cases}
\]
We claim that $X_{\vartheta} = X_G$.
To illustrate how to use the cycle decomposition, consider the example cycle $\gamma = (0,2,3,1,2,5,3,4,5,3)$ whose corresponding word read $u := u(\gamma)$ is given by $u = 02312534530$.
As $\gamma$ is a cycle, the bi-infinite sequence $(0231253453)^\infty$ is a periodic element of $X_G$ and $u$ is a word in the language $\mc L(X_G)$.
We can identify the simple subcycle $(3,1,2,5)$ in $\gamma$ and so $u \overset{\bullet}{=} \vartheta(02\dot{3}4530)$ by realising $\vartheta$ as the identity on each letter except the left-most $3$ which should be substituted as $31253 \overset{\bullet}{=} \vartheta(3)$ (where we identify with a dot $\dot{\quad}$ a letter which is to be substituted non-trivially).
Continuing, we see that
\[u = 02312534530 \overset{\bullet}{=} \vartheta(02\dot{3}4530) \overset{\bullet}{=} \vartheta^2(02\dot{3}0) \overset{\bullet}{=} \vartheta^3(\dot{0})\]
and so $u \blacktriangleleft \vartheta^3(0)$, hence $u \in \mc L_\vartheta$.

As with the vertex shift proof, a similar method works for all periodic words in $X_G$ and so $X_G \subseteq X_{\vartheta}$.
The opposite inclusion is obvious, given the definition of $\vartheta_G$ in terms of simple cycles. Hence, $X_{\vartheta} = X_G$.
\end{example}
Edge shifts are the natural setting for studying \emph{sofic shifts} which are exactly those subshifts $Y$ that admit a factor map $f \colon X \to Y$ where $X$ is an SFT (in particular, every SFT is trivially sofic).
Equivalently, a sofic shift is an edge shift on a finite graph where there is no uniqueness condition restricting the possible edge-labels---that is, multiple edges can share the same label.
Although we can handle edge shifts in a similar manner to vertex shifts when $G$ has all of its edges uniquely labelled, there is an obstruction to this extending to the case of general sofic edge shifts.

In particular, the process of iterating a cycle-substitution on letters associated with a graph no longer mimics the cycle decomposition result of Proposition \ref{PROP:cycle-decomp}.
Essentially, the substitution is blind to the particular edges of a graph that a letter might be associated with, unless the identically-labelled edges are indistinguishable (for instance if they are labelled uniquely as in the case of SFTs). Therefore,  iterating the substitution will create illegal words by trying to insert a word read of a simple cycle $\gamma_1$ which cannot be nested in another cycle $\gamma_2$ although the \emph{label} of the first edge of $\gamma_1$ also appears at an edge of $\gamma_2$.
This is illustrated in the following example.
\begin{example}\label{EX:sofic}
Let $G$ be the labelled directed graph in Figure \ref{FIG:graph-mickey3} and let $X_G$ be the edge shift on $G$, hence a sofic shift.
We have used a hat $\widehat{\quad}$ to distinguish the two edges labelled by the letter $0$, but for the purposes of the subshift $X_G$, they are identical symbols, $0 = \widehat{0}$.
\begin{figure}[h]
\begin{tikzpicture}[shorten >=0.5pt]

\node[fill, circle](1) at (0,0) {};
\node[fill, circle](2) at (3,0) {};
\node[fill, circle](3) at (6,0) {};

\draw[->, thick, bend left, above] (1) to node {$0$} (2);
\draw[->, thick, bend left, above] (2) to node {$1$} (3);
\draw[->, thick, bend left, below] (3) to node {$2$} (2);
\draw[->, thick, bend left, below] (2) to node {$3$} (1);

\draw[->, thick, out = -40, in = 40, looseness=20, right] (3) to node {$\widehat{0}$} (3);

\end{tikzpicture}
\caption{Graph $G$ with labelled edges for Example \ref{EX:sofic}.}
\label{FIG:graph-mickey3}
\end{figure}
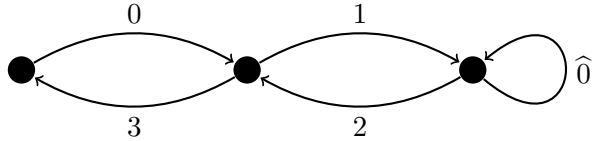
The edge-wise simple cycles beginning with with the edge $\widehat{0}$ are $C_{\widehat{0}} = \{(\widehat{0}), (\widehat{0},2,1), (\widehat{0},2,3,0,1)\}$.
One also identifies the simple cycle $(3,0) \in C_3$.
A reasonable interpretation of a cycle-substitution $\vartheta:= \vartheta_G$ on $\mc A = \{0,1,2,3\}$ associated with the graph $G$ would then have $a \overset{\bullet}{=} \vartheta(a)$ for all $a \in \mc A$, as well as $0210 \overset{\bullet}{=} \vartheta(0)$ and $303 \overset{\bullet}{=} \vartheta(3)$.
However, this would then give $302103 \overset{\bullet}{=} \vartheta(3\dot{0}3) \overset{\bullet}{=} \vartheta^2(\dot{3})$.
The word $302103$ is certainly not admitted by the language $\mc L(X_G)$ and so such an interpretation of a cycle-substitution cannot be correct for strictly sofic shifts.
At best, we can guarantee $X_G \subseteq X_{\vartheta_G}$.

This mistake is introduced by na\"{i}vely inserting the word read of the cycle $\gamma_1 = (\widehat{0},2,1)$, given by $u(\gamma_1) = 0210$, into the word read of the cycle $\gamma_2 = (3,0)$.
However, these two cycles are incompatible.
There is no cycle $\gamma$ in $G$ with simple subcycle $\gamma_1$ such that deleting $\gamma_1$ leaves $\gamma_2$.

\end{example}
It follows that, if sofic shifts can also be encoded as RS-subshifts, techniques different to those presented in the proof of Theorem \ref{THM:main} are needed (certainly, there exist purely sofic shifts which are also primitive RS-subshifts \cite{RS:random}).
\begin{question}
Is every topologically transitive sofic shift topologically conjugate to a primitive RS-subshift?
\end{question}

Principally, we have focussed in this article on the topological dynamics of RS-subshifts.
However, note that there are also important measure-theoretic aspects that one should study.
For instance, one can define shift-invariant measures on RS-subshifts $X_{\vartheta}$, which encode the relative word-frequencies of legal words for almost all elements in $X_\vartheta$ \cite{RS:random}.
Many of these measures turn out to be ergodic \cite{GS:ergodic}.
Since every topologically transitive SFT can be realised as an RS-subshift, the following question is then a natural one.

\begin{question}
Given a Markov measure on an SFT, does there exist a primitive random substitution (together with probability assignments) such that the frequency measure of the associated RS-subshift coincides with the Markov measure on the SFT?
For cycle-substitutions, is every such frequency measure also a Markov measure?
\end{question}

As we will soon shift to discussing topological entropy, it is also worth mentioning that metric entropy can play an important role in the ergodic theory of RS-subshifts.
Other than in the Ph.D thesis of Wing \cite{W:wing-thesis}, we are aware of no work in the literature which attempts to explore metric entropy of RS-subshifts.
Therefore, although we do not discuss the metric entropy here, we would be remiss for not stating the following natural questions which currently remain open.

\begin{question}
Are RS-subshifts intrinsically ergodic? That is, does every primitive RS-subshift admit a measure of maximal entropy and are they unique? Can a measure of maximal entropy be realised as a frequency measure?
\end{question}

Clearly, by Theorem \ref{THM:main}, we know of at least one family of intrinsically ergodic primitive RS-subshifts---namely those coming from cycle-substitutions---as the Parry measure is the unique measure of maximal entropy for SFTs \cite{P:intrinsic-markov}.
Whether Parry measures are ever frequency measures remains unclear.
For instance, some preliminary hand calculations indicate that Markov measures for the golden mean shift can be realised as frequency measures for an associated cycle-substitution, however this is far from a proof and remains speculative.

\section{Entropy}\label{SEC:entropy}
The goal of this section is to explicitly show that the possible values of topological entropy that can be realised for RS-subshifts are dense in the positive reals.
This is obviously a corollary of the main result in Section \ref{SEC:main}---compare Corollary~\ref{COR:Perron-numbers}.
However, the advantage here is that we rely only on elementary methods, rather than needing the full power of results related to Perron numbers and shifts of finite type.
One other advantage is that the results proved along the way are of independent interest and are not themselves corollaries of the main theorem of Section \ref{SEC:main}.

Throughout, for a random substitution $\vartheta$, let $h_\vartheta:= h_{\operatorname{top}}(X_\vartheta)$ denote the topological entropy of the associated RS-subshift $X_\vartheta$ and let $p_\vartheta(n):= |\mc L^n(X_\vartheta)|$ denote the corresponding complexity function.
Let
\[
\boldsymbol H = \{h_\vartheta \mid \vartheta \mbox{ is a primitive random substitution}\}
\]
denote the set of possible values of topological entropy for primitive RS-subshifts.
This is the set with which we are principally concerned.

The following proposition illustrates a general method (or family of methods) that can be used to construct for any primitive random substitution, a new primitive random substitution which factors onto the original, and where we can control the entropy of the new substitution in terms of the entropy of the original.
The basic idea is that we introduce new copies of letters from the original alphabet which are coupled with the original in controllable ways.
In the case of the next result, the new substitution can be constructed to have arbitrarily large entropy.
This kind of method will be used often in this section.
\begin{prop}\label{PROP:log2}
Let $\vartheta$ be a primitive random substitution on an alphabet $\mc A$ with RS-subshift $X_\vartheta$.
For all integers $m \geq 2$, there exists a primitive random substitution $\hat{\vartheta}$ and a factor map $f \colon X_{\hat{\vartheta}} \to X_\vartheta$ such that
\[
h_{\hat{\vartheta}} = h_\vartheta + \log (m).
\]
In particular, the set $\boldsymbol H$ is closed under addition by $\log (m)$.
\end{prop}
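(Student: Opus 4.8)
The plan is to realise $X_{\hat\vartheta}$ as an $m$-fold ``decorated'' copy of $X_\vartheta$ in which the decoration is chosen freely and \emph{reset} at every substitution step. Set $\hat{\mc A} = \mc A \times \{1,\dots,m\}$ and let $\pi \colon \hat{\mc A}^\ast \to \mc A^\ast$ be the monoid homomorphism induced by $(a,j) \mapsto a$; call $\hat v$ a \emph{lift} of $v$ if $\pi(\hat v) = v$. Define $\hat\vartheta$ on $\hat{\mc A}$ by
\[
\hat\vartheta((a,j)) = \{\hat u \in \hat{\mc A}^+ \mid \pi(\hat u) \in \vartheta(a)\},
\]
the set of all lifts of all realisations of $\vartheta(a)$, so that the decoration on the input letter is irrelevant and the decorations on the output letters are unconstrained. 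This set is finite (bounded by $m^L\,|\vartheta(a)|$ with $L = \max_{u\in\vartheta(a)}|u|$), so $\hat\vartheta$ is a well-defined random substitution, and the $1$-block code $f \colon \hat{\mc A}^\Z \to \mc A^\Z$ induced by $\pi$ is the candidate factor map.

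First I would prove, by induction on $k \geq 1$, that $\hat\vartheta^k((a,j)) = \{\hat u \mid \pi(\hat u) \in \vartheta^k(a)\}$: the base case is the definition, and the inductive step uses that $\pi$ commutes with concatenation together with the fact that $\hat\vartheta$ ignores the decoration of its argument, so applying $\hat\vartheta$ to an arbitrary lift of a word in $\vartheta^k(a)$ produces precisely the lifts of the words in $\vartheta^{k+1}(a)$. Since a subword of a lift of $w$ is exactly a lift of a subword of $w$, this yields $\mc L_{\hat\vartheta} = \{\hat v \mid \pi(\hat v) \in \mc L_\vartheta\}$ and hence $X_{\hat\vartheta} = \{\hat x \in \hat{\mc A}^\Z \mid f(\hat x) \in X_\vartheta\}$, which is non-empty because $X_\vartheta$ is. The restriction of $f$ to $X_{\hat\vartheta}$ is then a sliding block code onto $X_\vartheta$ (every $x \in X_\vartheta$ has a lift in $X_{\hat\vartheta}$), hence a factor map. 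For the entropy, the description of $\mc L_{\hat\vartheta}$ gives $p_{\hat\vartheta}(n) = m^n\, p_\vartheta(n)$, as each word in $\mc L^n(X_\vartheta)$ has exactly $m^n$ lifts; taking $\tfrac1n\log$ and letting $n\to\infty$ gives $h_{\hat\vartheta} = h_\vartheta + \log m$. (Equivalently, $X_{\hat\vartheta}$ is conjugate to the product $X_\vartheta \times \{1,\dots,m\}^\Z$ and topological entropy is additive over products, but the complexity-function computation keeps the argument elementary in the spirit of this section.) Finally, for primitivity I would fix $k \geq 1$ with $a_i \blacktriangleleft \vartheta^k(a_j)$ for all $i,j$ (possible since $\vartheta$ is primitive); given $(a,i),(b,j)\in\hat{\mc A}$, a realisation of $\vartheta^k(b)$ containing the letter $a$ admits a lift in $\hat\vartheta^k((b,j))$ in which that occurrence of $a$ carries the decoration $i$, so $(a,i) \blacktriangleleft \hat\vartheta^k((b,j))$. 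The closure statement is then immediate: if $h = h_\vartheta \in \boldsymbol H$, then $h + \log m = h_{\hat\vartheta} \in \boldsymbol H$.

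I expect no serious obstacle: the one step needing care is the induction of the second paragraph, namely checking that iterating $\hat\vartheta$ reproduces \emph{all} lifts of the iterates of $\vartheta$ and nothing else — the resetting of decorations at each step is exactly what makes both inclusions hold — together with minor bookkeeping to make sure the exponent $k$ can be taken $\geq 1$ (single letters are handled by the $k=0$ term in $\mc L_{\hat\vartheta}$, and the degenerate case $|\mc A|=1$ is checked directly).
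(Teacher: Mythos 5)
Your proposal is correct and follows essentially the same route as the paper: the same decorated alphabet $\mc A \times \{1,\dots,m\}$, the substitution $(a,i) \mapsto f^{-1}(\vartheta(a))$, the one-block projection as factor map, and the complexity count $p_{\hat\vartheta}(n) = m^n p_\vartheta(n)$ giving $h_{\hat\vartheta} = h_\vartheta + \log m$. The extra details you supply (the induction identifying $\mc L_{\hat\vartheta}$ with the lifts of $\mc L_\vartheta$, the explicit primitivity check, and the remark about the product with the full shift) are all consistent with, and merely flesh out, the paper's argument.
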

\begin{proof}
Let $\hat{\mc A} = \mc A \times \{1, \ldots, m\}$.
For every word $u$ in $\hat{\mc A}^n$, let $f \colon \hat{\mc A}^n \to \mc A^n$ be the projection map given by the one-block code $(a,i) \mapsto a$.
Define the substitution $\hat{\vartheta}$ by $\hat{\vartheta} \colon (a,i) \mapsto f^{-1}(\vartheta(a))$.
The substitution $\hat{\vartheta}$ is primitive because $\vartheta$ is primitive and the one-block code $f$ restricts to a factor map $X_{\hat{\vartheta}} \to X_\vartheta$.

If $u \in \mc L^n(X_\vartheta)$, then $|f^{-1}(u)| = m^n$ as the preimage of each letter in $u$ has $m$ possibilities and all such words are $\hat{\vartheta}$-legal by construction.
It follows that $p_{\hat{\vartheta}}(n) = m^n p_{\vartheta}(n)$.
We then calculate
\[
\begin{array}{rcl}
h_{\hat{\vartheta}} = \displaystyle\lim_{n \to \infty} \frac{\log \left(p_{\hat{\vartheta}}(n)\right)}{n}&=& \displaystyle\lim_{n \to \infty} \frac{\log \left(m^n p_\vartheta(n)\right)}{n}\\
												  		&=& \log (m) + \displaystyle\lim_{n \to \infty}\frac{\log \left(p_\vartheta(n)\right)}{n}\\
												  		&=& \log (m) + h_\vartheta.
\end{array}
\]
\end{proof}
By construction, it is easy to see that $X_{\hat{\vartheta}}$ is the product (as a shift dynamical system) of $X_{\vartheta}$ with $\{1, \ldots, m\}^{\Z}$.
Hence, we could have also used the fact that $h_{\operatorname{top}}(X \times Y) = h_{\operatorname{top}}(X) + h_{\operatorname{top}}(Y)$ to prove Proposition \ref{PROP:log2}.

An important use of Proposition \ref{PROP:log2} is that we can now easily construct RS-subshifts with the same positive entropy but which are not topologically conjugate.
One simply takes a pair of non-conjugate deterministic substitutions $\dsub_1$ and $\dsub_2$ and then forms the new substitutions $\hat{\dsub}_1$ and $\hat{\dsub}_2$ for $m=2$ so that $h_{\hat{\dsub}_1} = h_{\hat{\dsub}_2} = \log (2)$.
There is still some work to show that $X_{\hat{\dsub}_1}$ and $X_{\hat{\dsub}_2}$ are not topologically conjugate---we leave that detail to the reader.
This suggests that, although entropy is a useful and robust invariant for studying the dynamics of RS-subshifts, further tools will be needed in the quest for a dynamical classification.

As illustrated in the next example, we can also produce substitutions on as few as three letters which have arbitrarily small positive topological entropy.
Moreover, because the following example factors onto an aperiodic minimal subshift, the substitution can be chosen to have no periodic points.
\begin{example}
We will show that the substitution $\vartheta \colon a \mapsto \{b^k\}, \bar{a} \mapsto \{b^k\}, b \mapsto \{b^{k-1}a, b^{k-1}\bar{a}\}$ has topological entropy $h_\vartheta \leq \frac{1}{k}\log 2$.
Let $\dsub \colon A \mapsto B^k, B \mapsto B^{k-1}A$ be a primitive deterministic substitution.
Note that the topological entropy of any deterministic substitution is $0$, so $h_\dsub = 0$.
It is clear that $X_\vartheta$ factors onto $X_\dsub$ via the one-block code $f \colon a \mapsto A, \bar{a} \mapsto A, b \mapsto B$.

Every $\dsub$-admitted word of length $k$ has at most one $A$ appearing, and by extension every $\dsub$-admitted word of length $nk$ has at most $n$ appearances of $A$.
It follows that $f \colon \mc L^{nk}(X_\vartheta) \to \mc L^{nk}(X_\dsub)$ is everywhere at most $2^n$-to-$1$ as there are two possible preimages $f^{-1}(u)$ for every appearance of the letter $A$ in the word $u \in \mc L^{nk}(X_\dsub)$.
It follows that $p_\vartheta(nk) \leq 2^n p_\dsub(nk)$.
We can then calculate
\[
\begin{array}{rcl}
h_\vartheta		&	=	& \displaystyle\lim_{n \to \infty} \frac{\log \left(p_\vartheta(nk)\right)}{nk}\\
				&	\leq	& \displaystyle\lim_{n \to \infty} \frac{1}{nk}\log \left(2^n\right) + \lim_{n \to \infty}\frac{\log \left(p_\dsub(nk)\right)}{nk}\\
				&	=	& \frac{1}{k}\log (2) + h_\dsub\\
				&	=	& \frac{1}{k}\log (2).
\end{array}
\]
From results in \cite{RS:random}, we know that $0 < h_\vartheta$ and so we have $0 < h_\vartheta \leq \frac{1}{k}\log 2$.
For any $\epsilon$, we can choose a large enough $k$ so that $h_\vartheta < \epsilon$.
\end{example}
The construction presented in the above example motivates the general construction used in the next proposition.
For any RS-subshift, we want to construct extensions with arbitrarily small increases in entropy.
As a first step, we present a family of subshifts with arbitrarily small upper bounds for the associated entropies.

\begin{lemma}\label{LEM:vanishing-entropy}
Let $(k_n)_{n \in \N}$ and $(K_n)_{n \in \N}$ be sequences of integers satisfying $c_1 k^n \leq k_n \leq K_n \leq c_2 K^n$ for all $n \in \N$ and some fixed $c_1, c_2, k, K \in \R$ with $1 < k \leq K$ and $c_1, c_2 > 0$.
For $ x \in\{ 0,1 \}^{\Z}$ let $(q(x)_m)_{m \in \Z}$ be the sequence of all integers such that $x_{q(x)_m} = 1$ for all $m \in \Z$.
Without loss of generality, we can choose the sequence $q(x)$ to be strictly increasing for all $x \in \{0,1\}^{\Z}$.
Define
\[
X_{(n)} = \{ x \in \{ 0,1 \}^{\Z} \mid k_n \leq q(x)_{m+1} - q(x)_{m} \leq K_n \;\, \text{for all} \;\, m \in \Z \},
\] 
in other words, the subshift of all sequences with uniform lower and upper bounds for the distance between two consecutive appearances of $1$, given by $k_n$ and $K_n$, respectively.
Then,
\[
\lim_{n \rightarrow \infty} h_{\operatorname{top}}\left(X_{(n)}\right) = 0.
\]
\end{lemma}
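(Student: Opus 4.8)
The plan is to bound the complexity function $p_{(n)}(N) := |\mc L^N(X_{(n)})|$ of $X_{(n)}$ directly, pass to the entropy via the standard identity $h_{\operatorname{top}}(X_{(n)}) = \lim_{N \to \infty} \tfrac{1}{N}\log p_{(n)}(N)$, and then estimate the resulting bound using the exponential growth of $k_n$.

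First I would note that a word $w \in \mc L^N(X_{(n)})$ is completely determined by the set of positions in $\{0, \ldots, N-1\}$ carrying the symbol $1$. By the defining gap condition, these positions form an increasing list $p_1 < p_2 < \cdots < p_r$ with $p_1 \le K_n$ (any window of length $K_n$ in a point of $X_{(n)}$ must contain a $1$) and $p_{i+1} - p_i \in \{k_n, \ldots, K_n\}$; moreover $r - 1 \le N/k_n$ since consecutive $1$'s are at least $k_n$ apart. Hence $w$ is specified by a choice of $p_1$ (at most $K_n + 1$ possibilities, allowing for the degenerate case $r = 0$) together with at most $\lceil N/k_n\rceil$ successive gaps, each drawn from a set of size $K_n - k_n + 1$; any positions left over at either end of the window are forced to be $0$. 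This crude count gives
\[
p_{(n)}(N) \;\le\; (K_n + 1)\,(K_n - k_n + 1)^{\lceil N/k_n \rceil} .
\]

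Taking logarithms, dividing by $N$, and letting $N \to \infty$ yields
\[
h_{\operatorname{top}}\bigl(X_{(n)}\bigr) \;\le\; \frac{\log(K_n - k_n + 1)}{k_n} \;\le\; \frac{\log(K_n + 1)}{k_n}.
\]
At this point I would substitute the hypotheses $k_n \ge c_1 k^n$ and $K_n \le c_2 K^n$: for all sufficiently large $n$ the right-hand side is bounded by $\dfrac{\log(2 c_2 K^n)}{c_1 k^n} = \dfrac{\log(2 c_2) + n\log K}{c_1 k^n}$. Since $k > 1$, the denominator grows exponentially in $n$ while the numerator grows only linearly, so this bound tends to $0$; combined with $h_{\operatorname{top}}(X_{(n)}) \ge 0$ this forces $\lim_{n \to \infty} h_{\operatorname{top}}(X_{(n)}) = 0$.

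The only step requiring any care is the word count: one must make sure that the boundary contributions — the offset before the first $1$, the partial gap at the right end of the window, and the case of few or no $1$'s — are all absorbed into the bound without affecting the exponential rate $\tfrac{1}{k_n}\log(K_n - k_n + 1)$. After that the argument is a routine limit computation and I foresee no genuine obstacle. One could alternatively present $X_{(n)}$ as an explicit sofic (indeed SFT, after recoding) shift and read off its entropy from the Perron--Frobenius eigenvalue of a transition matrix, but the direct counting argument is shorter and keeps this section elementary and self-contained, as intended.
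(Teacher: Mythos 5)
Your proposal is correct and follows essentially the same route as the paper: bound $p_{(n)}(N)$ by counting the admissible gap configurations of the $1$'s (getting a bound of the shape $\mathrm{poly}(K_n)\,(K_n-k_n+1)^{N/k_n}$), divide by $N$, and then use $k_n \geq c_1 k^n$, $K_n \leq c_2 K^n$ to see the resulting bound $O\bigl(\log(K_n)/k_n\bigr)$ tends to $0$. The only difference is cosmetic bookkeeping of the boundary/degenerate cases (a constant factor in the count), which, as you note, cannot affect the exponential growth rate.
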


\begin{proof}
Let $v \in \mathcal{L}(X_n)$ be any legal word of length $N \in \N$.
Then, $v$ contains at most $\left\lfloor\frac{N-1}{k_n}\right\rfloor + 1 $ times the letter $1$ by the definition of $X_{(n)}$.
Thus, $v$ is certainly a prefix of some word $w$ in 
\[
A_N = \left\{ w \in \mathcal{L}\left(X_{(n)}\right) \:\left|\: |w|_1 = \left\lfloor\frac{N-1}{k_n}\right\rfloor + 1 \right. \right\} .
\]
Note that the occurrences of $1$ partition the length of $w$ into $\left\lfloor\frac{N-1}{k_n}\right\rfloor + 2$ intervals, each of them containing between $k_n-1$ and $K_n-1$ occurrences of $0$, except for the boundary intervals which can contain between $0$ and $K_n -1$ occurrences of $0$.
Therefore, the complexity function of $X_{(n)}$ fulfils
\[
p_{(n)}(N) \leq \operatorname{card} \, (A_N) = K_n^2 (K_n - k_n + 1)^{\left\lfloor\frac{N-1}{k_n}\right\rfloor}.
\]
This yields
\begin{align*}
h_{\operatorname{top}}\left(X_{(n)}\right) &= \lim_{N \rightarrow \infty} \frac{1}{N} \log\left(p_{(n)}(N)\right) \leq \lim_{N \rightarrow \infty} \frac{\left\lfloor\frac{N-1}{k_n}\right\rfloor}{N} \log\left(K_n^2 (K_n - k_n + 1)\right)
\\ & =  \frac{1}{k_n} \log\left(K_n^2 (K_n - k_n + 1)\right) \leq C \frac{n}{k^n} \log(K) \xrightarrow{n \rightarrow \infty} 0,
\end{align*}
for some constant $C > 0$.
\end{proof}

\begin{remark}
As an aside, let us mention that the subshift $X_{(n)}$ defined in Lemma~\ref{LEM:vanishing-entropy} is in fact an SFT. It is defined by the finite set of forbidden words 
\[
\mc F_n = \{11, 101, \ldots, 1\overbrace{00\cdots00}^{k_n-2} 1\}\cup \{\overbrace{00\cdots00}^{K_n}\}.
\]
\end{remark}

\begin{definition}
A random substitution $\vartheta$ is of minimum exponential growth if there is a growth rate $r > 1$ and a constant $C>0$ such that 
\[
C r^n \leq \min \, \{ |w| \mid w \overset{\bullet}{=} \vartheta^n(a), a \in \mathcal{A} \}
\]
holds for all $n \in \N$.
\end{definition}
\begin{remark}
An equivalent criterion for $\vartheta$ to be of minimum exponential growth is that there is a $k \in \mathbb{N}$ such that $\vartheta^k$ is certainly growing.
That is,	
\[
| w | \geq 2 \quad \text{for all} \quad w \blacktriangleleft \vartheta^k(a),\; a \in \mc A. 
\]

Note that it is enough to check this condition for $k = m = \operatorname{card} \left( \mc A \right)$ because any sequence of letters $(b_j)_{j \in \mathbb{N}}$ with $b_{j+1} \blacktriangleleft\vartheta(b_j)$ would contain at least one letter twice within the first $m + 1$ elements.
This is closely related to the study of multitype branching processes where the above condition is known to characterise what is known as the \emph{B\"ottcher case} \cite{J:galton-watson}.

\end{remark}

\begin{prop} \label{PROP:log-epsilon}
Let $\epsilon >0$ and let $\vartheta$ be a non-deterministic primitive random substitution of minimum exponential growth on an alphabet $\mc A$ with RS-subshift $X_{\vartheta}$.
Then there exists a primitive random substitution $\hat{\vartheta}$ and a factor map $f \colon X_{\hat{\vartheta}} \to X_\vartheta$ such that
\[
h_\vartheta < h_{\hat{\vartheta}} < h_\vartheta + \epsilon.
\]
\end{prop}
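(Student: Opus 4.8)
The plan is to realise $\hat\vartheta$ as a ``decorated'' high power of $\vartheta$ in which extra data is recorded only at the sparse boundaries between level-one inflation tiles, so that the fibre of the factor map behaves like one of the run-length-limited shifts $X_{(n)}$ from Lemma~\ref{LEM:vanishing-entropy}, whose entropy can be made arbitrarily small. First I would pass to a power: for any $n \in \N$ the substitution $\psi := \vartheta^n$ is again primitive, and a short argument using primitivity of $\vartheta$ gives $\mc L_\psi = \mc L_\vartheta$, hence $X_\psi = X_\vartheta$ and $h_\psi = h_\vartheta$. Write $\ell_n = \min\{|w| \mid w \overset{\bullet}{=} \vartheta^n(a),\ a \in \mc A\}$ and $L_n = \max\{|w| \mid w \overset{\bullet}{=} \vartheta^n(a),\ a \in \mc A\}$; minimum exponential growth gives $\ell_n \geq C r^n$ for some $C>0$, $r>1$, while trivially $L_n \leq L_1^n$ with $L_1 = \max\{|w| \mid w \overset{\bullet}{=} \vartheta(a),\ a \in \mc A\}$. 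Fix $m = 2$, let $F \subseteq \mc A$ be the set of letters occurring as the first letter of some realisation of some $\psi(c)$, and set $\hat{\mc A} = \mc A \cup \{a^{(s)} \mid a \in F,\ 1 \le s \le m\}$ with the one-block code $f$ erasing superscripts. Define $\hat\vartheta := \hat\psi$ by declaring, for every $\beta \in \hat{\mc A}$, every realisation $c_1 c_2 \cdots c_\ell \overset{\bullet}{=} \psi(f(\beta))$ and every $s$, that $c_1^{(s)} c_2 \cdots c_\ell \overset{\bullet}{=} \hat\psi(\beta)$, these being all the realisations. Then $f \circ \hat\psi = \psi \circ f$.

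Next I would check the structural claims. Every realisation of $\hat\psi(\beta)$ has length $\geq \ell_n \geq 2$ for $n$ large, so $X_{\hat\psi} \neq \varnothing$ by Proposition~\ref{PROP:rs-subs}. Unwinding $\hat\psi^k(\beta) = \hat\psi(\hat\psi^{k-1}(\beta))$ shows that in any realisation of $\hat\psi^k(\beta)$ the marked letters sit precisely at the left endpoints of the level-one inflation tiles, with labels chosen freely and independently; in particular consecutive marks lie at distance in $[\ell_n, L_n]$. For primitivity of $\hat\psi$ (for $n$ large), using primitivity of $\psi$ one reduces to: every $a^{(s)}$ with $a \in F$ occurs (some $\psi$-inflation word starts with $a$), and every unmarked $a$ occurs, i.e. $a$ appears at a non-initial coordinate of some realisation of some $\psi(c)$; this is the one place minimum exponential growth enters, via: pick $c_0$ with a realisation $d_1\cdots d_p \overset{\bullet}{=} \vartheta(c_0)$, $p\geq 2$, and use $a \blacktriangleleft \vartheta^{n-1}(d_2)$ together with the fact that realisations of $\vartheta^{n-1}(d_1)$ have length $\geq \ell_{n-1} \geq 2$, so that $a$ lands at a coordinate $\geq 2$ of a realisation of $\vartheta^{n-1}(\vartheta(c_0)) = \vartheta^n(c_0)$. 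Finally $f\circ\hat\psi = \psi\circ f$ gives $f(X_{\hat\psi}) \subseteq X_\psi = X_\vartheta$, and a standard compactness argument lifts every point of $X_\vartheta$, so $f \colon X_{\hat\vartheta} \to X_\vartheta$ is a factor map.

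For the entropy, a length-$N$ word of $X_{\hat\psi}$ is determined by its $f$-image in $X_\psi$, its set of mark positions (a subset of $\{0,\dots,N-1\}$ with consecutive gaps in $[\ell_n, L_n]$), and a label at each of its at most $N/\ell_n + 1$ marks; bounding the number of mark patterns exactly as in the proof of Lemma~\ref{LEM:vanishing-entropy} (with $k_n = \ell_n$, $K_n = L_n$) yields $p_{\hat\psi}(N) \leq p_\psi(N)\, L_n^2 (L_n - \ell_n + 1)^{N/\ell_n} m^{N/\ell_n + 1}$ and hence
\[
h_{\hat\vartheta} \leq h_\vartheta + \frac{1}{\ell_n}\log(m L_n) \leq h_\vartheta + \frac{\log m + n\log L_1}{C r^n}.
\]
For a matching lower bound, fix for each $\psi$-legal word $u$ of length $N$ one inflation-tile decomposition of a window realising it: the mark positions are then forced, but the labels at the $\geq N/L_n - 2$ marks in the window are free, and distinct pairs (underlying word, label pattern) give distinct $\hat\psi$-legal words, so $p_{\hat\psi}(N) \geq p_\psi(N)\, m^{N/L_n - 2}$, whence $h_{\hat\vartheta} \geq h_\vartheta + \tfrac{\log m}{L_n} > h_\vartheta$. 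Since the upper bound tends to $h_\vartheta$ as $n \to \infty$, choosing $n$ large enough that $\tfrac{\log m + n\log L_1}{C r^n} < \epsilon$ and setting $\hat\vartheta := \hat\psi$ for this $n$ gives $h_\vartheta < h_{\hat\vartheta} < h_\vartheta + \epsilon$, as required.

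The main obstacle is the combinatorial bookkeeping behind the structural claims: proving that the $\hat\psi$-legal words are \emph{exactly} the $\psi$-legal words carrying marks at level-one tile boundaries with arbitrary labels (so that both entropy bounds hold as stated), and verifying that $\hat\psi$ is primitive---which forces the restriction of the marked alphabet to $F$ and relies on minimum exponential growth to guarantee that unmarked letters still turn up in the interiors of inflation tiles. Everything else is routine estimation.
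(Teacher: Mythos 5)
Your proposal is correct and follows essentially the same strategy as the paper: pass to a high power $\vartheta^n$, decorate the alphabet so that each level-one inflation word carries a bounded amount of extra data, bound the entropy increase from above by $\tfrac{1}{k_n}\log(\cdot)$ plus the run-length-limited counting of Lemma~\ref{LEM:vanishing-entropy} and from below by $\tfrac{1}{K_n}\log m>0$, then choose $n$ large. The only (harmless) difference is the decoration scheme: you mark the first letter of each inflation tile with one of $m$ free labels and restrict marked letters to possible initial letters, whereas the paper marks one chosen position per realisation on the doubled alphabet $\mc A\times\{0,1\}$ — your variant even avoids using the non-determinism hypothesis, which the paper only needs to make its marking map well defined.
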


\begin{proof}
For any $\epsilon > 0$, we will give an explicit construction of a primitive random substitution $\hat{\vartheta}$ satisfying the above condition.
Note that $\vartheta$ certainly has a maximum exponential growth rate such that 
\[
\max \{ |w| \mid w \overset{\bullet}{=} \vartheta^n(a), a \in \mathcal{A} \} \leq R^n, \:\: \text{for all} \; n \in \N.
\]
In particular we could choose $R = \max_{a \in \mc A} \max \{|w| \mid w \overset{\bullet}{=} \vartheta(a)\}$. Let $m = |\mc A|$.
By assumption, there exists $n_0 \in \N$ with $|w| \geq m$ for all $w \overset{\bullet}{=} \vartheta^{n_0}(a)$ and any $a \in \mc A$.
Because of primitivity there is a $p \in \N$ such that for $n \geq p$ we have $a \blacktriangleleft \vartheta^{n-n_0}(b)$ for all $a,b \in \mc A$.
Then, every realisation $w^a \overset{\bullet}{=} \vartheta^n(a)$ can be decomposed as $w^a = w^a_{(0)} \cdots w^a_{(\ell-1)}$ with $w^a_{(j)} \overset{\bullet}{=} \vartheta^{n-n_0}(\vartheta^{n_0}(a)_j)$ for all $j \in \{0, \ldots, \ell -1 \}$ and some $\ell \geq m$.
We can choose a realisation $\widetilde{w}^a_{(0)}$ containing each letter of the alphabet.
Since we assume that $\vartheta$ is genuinely a random substitution, we can choose $n$ large enough that $\vartheta^{n-n_0}(b)$ contains at least two distinct realisations for each $b \in \mc A$.
In particular, there are at least $2^{m-1}\geq m$ distinct extensions $\widetilde{w}^a$ of $\widetilde{w}^a_{(0)}$ (in the notation above).
We can therefore choose realisations $\widetilde{w}^a$ with $\widetilde{w}^a_{q_a} = a$ for some position $q_a$ and $\widetilde{w}^a \neq \widetilde{w}^b$ as long as $a \neq b$ for all $a,b \in \mc A$. 

Next we want to define a substitution $\hat{\vartheta}_n$ on the extended alphabet $\hat{\mc A} = \mc A \times \{0,1\}$ by introducing an additional degree of freedom on \emph{each} of the possible realisations of $\vartheta^n(a)$, $a \in \mc A$.
To this end, we first introduce for $i \in \{0,1\}$ the canonical lift $\varphi_i\colon \mathcal{A} \rightarrow \hat{\mc A}$ by $\varphi_i(a) = a_i$ with obvious extension to a homomorphism on finite and infinite words.
Let $\phi$ and $\phi'$ be the injective maps on the set $\cup_{a \in \mc A} \vartheta^n(a)$, defined via $\phi(w^a)_k = \varphi_0(w^a_k)$ and
\begin{align*}
\phi'(\widetilde{w}^a)_k &= \begin{cases}
\varphi_1\left(\widetilde{w}^a_k\right), \quad \text{for} \; k = q_a \\
\varphi_0(\widetilde{w}^a_k), \quad \text{for} \; k \neq q_a
\end{cases} \\
\phi'(w^a)_k &= \begin{cases}
\varphi_1(w^a_k), \quad \text{for} \; k = 0 \\
\varphi_0(w^a_k), \quad \text{for} \; k \neq 0
\end{cases} \quad \text{if} \; w^a \neq \widetilde{w}^b \quad \text{for all} \; b \in \mc A,
\end{align*}
where $k$ ranges from $0$ to $| w^a | -1$.
That is, $\phi$ attaches an index $0$ to each letter and $\phi'$ does the same except for one position in the word, where it attaches an index $1$.
The special role of $\widetilde{w}^a$ serves to ensure that the letter $a_1$ appears in some of the images of $\phi'$ on $\vartheta^n(a)$.
Note that $\phi'$ is indeed a well-defined map on the \emph{union} of the sets $\vartheta^n(a)$, $a \in \mc A$, since we have made sure that $w^a = w^b$ implies $\phi'(w^a) = \phi'(w^b)$ (even if $a \neq b$) by the above construction.
It is now straightforward to check that the substitution $\hat{\vartheta}_n$ defined by
\[
\hat{\vartheta}_n \colon a_0,a_1 \mapsto \phi(\vartheta^n(a)) \cup \phi'(\vartheta^n(a)) \quad \text{for all} \; a \in \mc A
\]
is a primitive random substitution.
Indeed, $a_0 \blacktriangleleft
 \phi(\vartheta^n(b)) \subset \hat{\vartheta}_n(b_i)$ for any $a,b \in \mc A$, $i \in \{0,1\}$ and $a_1 \blacktriangleleft \hat{\vartheta}_n^2(b_i)$ for all $a,b \in \mc A$, $i \in \{0,1\}$.
Let $f \colon \hat{\mc A}^\Z \to \mc A^\Z$ be given on letters by $f(a_i) = a$ for all $a \in \mc A$ and $i \in \{0,1\}$.
The map $f$ is obviously a factor map and restricts to a factor map on the considered RS-subshifts, as $f(X_{\hat{\vartheta}_n}) = X_{\vartheta^n} = X_{\vartheta}$.

It now remains to show that we can pick $n \geq p$ in a way that $h_\vartheta < h_{\hat{\vartheta}_n} < h_\vartheta + \epsilon$. 
We first show that the upper bound holds for large enough $n \in \N$.
Let $v$ be any legal word of length $N$ in $\mathcal{L}(X_{\hat{\vartheta}_n})$.
Then, $v \overset{\bullet}{=} \hat{\vartheta}_n(w)$ for some legal $w = w_0 \cdots w_{|w| -1}$ and suppose $w$ is minimal with this property.
We can thus write $v = v^0 v^1 \cdots v^{|w|-1}$ with $v^j \overset{\bullet}{=} \hat{\vartheta}_n(w_j)$ for $j \in \{1,\ldots, |w|-2 \}$ as well as $v^0 \blacktriangleleft_s \hat{\vartheta}_n(w_0)$ and $v^{|w|-1} \blacktriangleleft_p \hat{\vartheta}_n(w_{|w|-1})$, where the indices $s$ and $p$ denote inclusion as a suffix or prefix, respectively.
Clearly, the analogous relations hold for the images of $v$ and $w$ under the factor map $f$.
In particular, there exists a decomposition of both $v$ and $w$ into inflation words such that the end points of such words coincide.
Let us set
\[
k_n = \min \, \{ |w| \mid w \overset{\bullet}{=} \vartheta^n(a), a \in \mathcal{A} \}, \quad K_n = \max \, \{ |w| \mid w \overset{\bullet}{=} \vartheta^n(a), a \in \mathcal{A} \}.
\]
Then, any $u \in \mc L^N(X_{\vartheta_n})$ is contained in at most $\left\lfloor\frac{N}{k_n} +2\right\rfloor$ level-$1$ inflation words of $\vartheta_n$.
Identifying every starting position of an inflation word with the letter $1$ and all other positions with $0$, we have at most $p_{(n)}(N)$ possibilities to partition $u$ into inflation words and boundary words where $p_{(n)}$ denotes the complexity function of the shift $X_{(n)}$ as defined in Lemma~\ref{LEM:vanishing-entropy}.
For every inflation word $u' \overset{\bullet}{=} \vartheta^n(a)$, there are exactly two preimages under the factor map that are themselves inflation words.
That is, there are at most $2^{\left\lfloor\frac{N}{k_n} +2\right\rfloor}$ preimages of $u$ preserving a given inflation word structure.
On the other hand each word $v \in \mc L^N (X_{\hat{\vartheta}_n})$ is in the preimage $f^{-1}(u)$ of some $u \in \mc L^N (X_{\vartheta_n})$ for \emph{some} preserved partitioning into inflation words, as we discussed above.
Thereby, 
$
p_{\hat{\vartheta}_n}(N) \leq 2^{\left\lfloor\frac{N}{k_n} +2\right\rfloor} p_{(n)}(N) p_{\vartheta_n}(N),
$
yielding
\[
h_{\hat{\vartheta}_n} = \lim_{N \rightarrow \infty} \frac{1}{N} \log \left(p_{\hat{\vartheta}_n}(N)\right) \leq \frac{1}{k_n} \log(2) + h(X_n) + h_{\vartheta_n}.
\]
Since $k_n \rightarrow \infty$ and $h(X_{(n)}) \rightarrow 0$ for $n \rightarrow \infty$, we can choose an $n$ such that $h_{\hat{\vartheta}_n} < h_{\vartheta_n} + \epsilon$ and the upper bound holds.

It is well-known that if there exists a factor map $X \to Y$ between topological dynamical systems, then $h_{\operatorname{top}}(X) \geq h_{\operatorname{top}}(Y)$.
It is therefore a simple observation that $h_{\hat{\vartheta}_n} \geq h_{\vartheta}$ thanks to the factor map $f$.
The difficulty remains in showing that the inequality is indeed strict.
Let $u \in \mc L^N(X_{\vartheta_n})$, then there is at least one way to partition $u$ into inflation words (and boundary words), all of which have maximal length $K_n$.
That is, $u$ contains at least $\left\lfloor N/K_n\right\rfloor-1$ inflation words.
Thus, there are at least $2^{\left\lfloor N/K_n \right\rfloor-1}$ distinct preimages of $u$ under the factor map $f$.
Consequently, $p_{\hat{\vartheta}_n}(N) \geq 2^{\left\lfloor N/K_n\right\rfloor -1} p_{\vartheta_n}(N)$ and
\[
h_{\hat{\vartheta}_n} \geq  \frac{1}{K_n} \log(2) + h_{\vartheta_n} > h_{\vartheta_n},
\]
for any $n \in \N$.

\end{proof}

\begin{remark}
The statement of Proposition~\ref{PROP:log-epsilon} is still true if we drop the assumption that $\vartheta$ is non-deterministic, since a similar construction as outlined in the proof also works for deterministic substitutions. We leave the details to the interested  reader.
\end{remark}

It would be ideal if the minimum exponential growth assumption on $\vartheta$ could be dropped, however we have yet to find a way to do so. In particular, as currently stated, Proposition \ref{PROP:log-epsilon} can unfortunately not be applied to any of the cycle-substitutions constructed in Section \ref{SEC:main}.

We can conclude from Proposition \ref{PROP:log-epsilon} that $\boldsymbol H$ has no isolated points (at least for those entropy values realised as a random substitution satisfying the hypothesis of the result).
This will soon be superseded by the main result of the section, but we find the above result interesting as it says that, not only can entropy be increased by an arbitrarily small amount, but in such a way that the entropy-increase is realised as an extension of RS-subshifts.
\begin{prop}\label{PROP:log-fractions}
For every $1 \leq \ell \leq k$ and every $m \geq 2$, there exists a primitive random substitution of constant length $\vartheta$ with entropy $h_\vartheta = \frac{\ell}{k}\log (m)$.
\end{prop}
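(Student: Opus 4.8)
The plan is to write down an explicit constant-length random substitution whose RS-subshift is a finite union of shifts of ``arithmetically sparse'' sequences, and then to read the entropy off this description. Since $\tfrac{\ell}{k} = \tfrac{2\ell}{2k}$ with $1 \le 2\ell \le 2k$, we may assume $k \ge 2$. Fix the alphabet $\mc A = \{0, 1, \dots, m-1\}$ and the position set $P = \{0, 1, \dots, \ell - 1\} \subseteq \{0, \dots, k-1\}$, and let
\[
S = \{\, x_0 x_1 \cdots x_{k-1} \in \mc A^k \mid x_r = 0 \text{ whenever } r \notin P \,\},
\]
so that $|S| = m^\ell$. Define $\vartheta \colon a \mapsto S$ for every $a \in \mc A$; all realisations have length $k \ge 2$, so $\vartheta$ is a random substitution of constant length $k$. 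Primitivity is immediate: since $0 \in P$, the set $S$ contains a realisation with any prescribed letter in position $0$, and therefore $b \blacktriangleleft \vartheta(a)$ for all $a, b \in \mc A$.

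The structural heart is the description of $X_\vartheta$. Because $\vartheta(a) = S$ does not depend on $a$, an easy induction gives $\vartheta^n(a) = S^{(n)}$ for all $a \in \mc A$ and $n \ge 1$, where $S^{(n)} \subseteq \mc A^{k^n}$ is the set of words obtained by freely concatenating $k^{n-1}$ blocks from $S$; concretely,
\[
S^{(n)} = \{\, x \in \mc A^{k^n} \mid x_j = 0 \text{ whenever } (j \bmod k) \notin P \,\}.
\]
Passing to subwords, $u_0 \cdots u_{N-1} \in \mc L_\vartheta$ if and only if there is some $s \in \Z/k\Z$ with $u_r = 0$ whenever $(r + s) \bmod k \notin P$. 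From this one deduces
\[
X_\vartheta = \bigcup_{t \in \Z/k\Z} Y_t, \qquad Y_t := \{\, x \in \mc A^\Z \mid x_j = 0 \text{ whenever } (j \bmod k) \notin t + P \,\},
\]
writing $t + P = \{(t+r) \bmod k \mid r \in P\}$. The inclusion $\supseteq$ is immediate from the description of $\mc L_\vartheta$. For $\subseteq$, suppose $x \notin \bigcup_t Y_t$; then the set $R(x)$ of residues mod $k$ carrying some nonzero entry of $x$ admits no translate contained in $P$, and a window of $x$ long enough to contain a nonzero entry in every residue class in $R(x)$ yields a subword which, by the criterion for $\mc L_\vartheta$, is not $\vartheta$-legal; hence $x \notin X_\vartheta$.

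It remains to compute the entropy. For each fixed $t$ we have $|\mc L^N(Y_t)| = m^{c_t(N)}$ with $c_t(N) = \#\{\, 0 \le j < N \mid (j \bmod k) \in t+P \,\}$, and since exactly $\ell$ residues mod $k$ lie in $t+P$ this gives $c_t(N) = \tfrac{\ell}{k} N + O(1)$, hence $h_{\operatorname{top}}(Y_t) = \tfrac{\ell}{k}\log m$. As $\mc L^N(X_\vartheta) = \bigcup_{t} \mc L^N(Y_t)$ is a union of at most $k$ sets each of cardinality $m^{\ell N/k + O(1)}$, we obtain $m^{\ell N/k + O(1)} \le p_\vartheta(N) \le k \cdot m^{\ell N/k + O(1)}$, and therefore $h_\vartheta = \lim_{N\to\infty} \tfrac1N \log p_\vartheta(N) = \tfrac{\ell}{k}\log m$. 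Note $X_\vartheta \neq \varnothing$, since the constant-$0$ sequence lies in every $Y_t$ (this also follows from Proposition~\ref{PROP:rs-subs}, as the realisations of $\vartheta$ have length $k \ge 2$).

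The step I expect to require the most care is the identification $X_\vartheta = \bigcup_t Y_t$, and within it the inclusion $\subseteq$, i.e.\ ruling out stray sequences in $X_\vartheta$; the remaining ingredients (primitivity, the formula for $\vartheta^n(a)$, and the entropy estimate) are routine.
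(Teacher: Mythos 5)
Your construction is correct, and it takes a genuinely different route from the paper. The paper's proof builds $\vartheta$ as an $m$-fold extension of a deterministic constant-length substitution: it starts from a primitive $\dsub$ with $|\dsub(a)|=\ell$, pads to the zero-entropy substitution $\psi\colon a\mapsto\dsub(a)a^{k-\ell}$, lifts to the alphabet $\mc A\times\{1,\dots,m\}$ so that exactly $\ell$ of every $k$ positions carry a free index, and then counts preimages under the one-block factor map ($p_\vartheta(nk)=m^{n\ell}p_\psi(nk)$) to read off $h_\vartheta=\frac{\ell}{k}\log m$. You instead dispense with any deterministic skeleton: your substitution $a\mapsto S$ is letter-independent, which lets you identify $X_\vartheta$ explicitly as the union $\bigcup_{t\in\Z/k\Z}Y_t$ of $k$ ``sparse'' subshifts and compute the complexity by counting the free residue classes directly; your handling of the case $k=1$ via $\frac{\ell}{k}=\frac{2\ell}{2k}$, the primitivity check (position $0\in P$ is free), and the non-emptiness remark are all fine. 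The paper's approach has the advantage of producing the entropy increase as an extension of a substitutive system, in the same spirit as the other results of its entropy section; yours is more elementary and completely explicit, at the price that $X_\vartheta$ is a rather degenerate (non-minimal, sofic-looking) subshift. One small imprecision: $|\mc L^N(Y_t)|$ is not exactly $m^{c_t(N)}$, since subwords of elements of $Y_t$ may start at any position modulo $k$, so $\mc L^N(Y_t)$ is itself a union over the $k$ offsets; this only inflates your upper bound by a factor at most $k$ (so $p_\vartheta(N)\le k^2\,m^{\ell N/k+O(1)}$, say), which disappears in the limit $\frac1N\log p_\vartheta(N)$, so the conclusion $h_\vartheta=\frac{\ell}{k}\log m$ stands.
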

\begin{proof}
Let $\dsub$ be a primitive deterministic substitution of constant length on $\mc A = \{a,b\}$ such that $|\dsub(a)| = \ell$.
Such a substitution exists for all $\ell \geq 2$.
If $\ell=1$, let $\dsub$ be the map $a \mapsto b, b \mapsto a$.
Let $\hat{\mc A} = \mc A \times \{1, \ldots, m\}$ be a new alphabet and define a one-block code $f \colon \hat{\mc A}^\Z \to \mc A^\Z$ by the projection map $f \colon (a,i) \mapsto a, (b,i) \mapsto b$. 

We define a random substitution on $\hat{\mc A}$ by
\[
\vartheta \colon
\begin{cases}
(a,i) \mapsto	& \{u(a,1)^{k-\ell}\mid u \in f^{-1}\dsub(a)\}, \\
(b,i) \mapsto	& \{u(a,1)^{k-\ell}\mid u \in f^{-1}\dsub(b)\},
\end{cases}
\]
for every $i \in \{1, \ldots, m\}$. The substitution $\vartheta$ is primitive by construction (which is why we asked that $\dsub$ is a non-trivial permutation in the case $\ell=1$) and is constant length with length $k$.

Let $\psi$ be a primitive deterministic substitution on $\mc A$ given by $\psi \colon a \mapsto \dsub(a)a^{k-\ell}, b \mapsto \dsub(b)a^{k-\ell}$.
As $\psi$ is deterministic, it has zero entropy and so $h_\psi = 0$.
By construction, the one-block code $f$ provides a factor map from $X_\vartheta$ to $X_\psi$.
The preimage of any word $u \in \mc L^{nk}(X_\psi)$ is a set containing exactly $m^{n\ell}$ elements as $u$ contains exactly $n\ell$ letters whose preimages under $f$ are not determined and those letters whose preimages are not determined have exactly $m$ possible preimages.
It follows that $p_\vartheta(nk) = m^{n\ell}p_\psi(n)$.
We calculate
\[
\begin{array}{rclclcl}
h_\vartheta &=& \displaystyle\lim_{n \to \infty} \frac{\log \left(p_\vartheta(n)\right)}{n}&=& \displaystyle\lim_{n \to \infty} \frac{\log \left(p_\vartheta(nk)\right)}{nk}	&=& \displaystyle\lim_{n \to \infty} \frac{\log \left(m^{n\ell}\right) p_\psi(nk)}{nk}\vspace{10pt}\\
												  		&=& \frac{\ell}{k}\log \left(m\right) + h_\psi&=& \frac{\ell}{k}\log \left(m\right).&&
\end{array}
\]

\end{proof}
\begin{theorem}
The set $\boldsymbol H = \{h_\vartheta \mid \vartheta \mbox{ is a primitive random substitution}\}$ is a dense subset of $\R_{\geq 0}$.
\end{theorem}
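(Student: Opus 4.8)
The plan is to read off density directly from Proposition \ref{PROP:log-fractions}, which already guarantees that every number of the form $\tfrac{\ell}{k}\log m$ with $1 \le \ell \le k$ and $m \ge 2$ lies in $\boldsymbol H$. So it suffices to show that this explicit countable set is dense in $\R_{\geq 0}$. I would first dispose of the target value $x = 0$: since $\tfrac{1}{k}\log 2 \in \boldsymbol H$ for every $k \ge 2$ (take $\ell = 1$) and $\tfrac{1}{k}\log 2 \to 0$, the point $0$ is already in the closure of $\boldsymbol H$.

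For an arbitrary target $x > 0$ and tolerance $\epsilon > 0$, the key step is to choose the base $m$ large enough to absorb $x$: pick an integer $m \ge 2$ with $\log m > x$ (any $m > e^{x}$ works), so that $t := x/\log m$ lies in the half-open interval $(0,1]$. Now use density of $\Q$ in $[0,1]$ to select integers $1 \le \ell \le k$ with $\bigl|\tfrac{\ell}{k} - t\bigr| < \epsilon/\log m$; multiplying by $\log m$ gives $\bigl|\tfrac{\ell}{k}\log m - x\bigr| < \epsilon$. By Proposition \ref{PROP:log-fractions}, $\tfrac{\ell}{k}\log m \in \boldsymbol H$, and since $x$ and $\epsilon$ were arbitrary, $\boldsymbol H$ is dense in $\R_{\geq 0}$. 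The constraint $\ell \le k$ demanded by Proposition \ref{PROP:log-fractions} is exactly what forces us to enlarge $m$ so that $t \le 1$; arranging $t \in (0,1]$ is the only point that needs a moment's care.

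As an alternative route one can keep $m = 2$ fixed and instead climb the real line using Proposition \ref{PROP:log2}, which asserts that $\boldsymbol H$ is closed under adding $\log 2$. The numbers $\tfrac{\ell}{k}\log 2$ are already dense in $[0,\log 2]$, so writing $x = n\log 2 + r$ with $n = \lfloor x/\log 2 \rfloor$ and $0 \le r < \log 2$, approximating $r$ to within $\epsilon$ by some $\tfrac{\ell}{k}\log 2 \in \boldsymbol H$, and then adding $\log 2$ a total of $n$ times yields a point of $\boldsymbol H$ within $\epsilon$ of $x$. Either way, I do not anticipate a genuine obstacle here: all the real content is packed into Propositions \ref{PROP:log2} and \ref{PROP:log-fractions} and their entropy computations, and this final statement is essentially a density argument for $\{\tfrac{\ell}{k}\log m\}$ together with the bookkeeping needed to respect the hypotheses of those propositions.
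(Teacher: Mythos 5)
Your proposal is correct, and both of your routes are essentially the ones in the paper: your alternative argument (fix $m=2$, approximate the remainder $r$ by $\tfrac{\ell}{k}\log 2$ via Proposition \ref{PROP:log-fractions}, then climb by repeated applications of Proposition \ref{PROP:log2}) is exactly the paper's proof, which exhibits $\log(2)\Q_{\geq 0} \subset \boldsymbol H$ and notes that this set is dense in $\R_{\geq 0}$. Your primary route (enlarging $m$ so that $x/\log m \in (0,1]$ and using Proposition \ref{PROP:log-fractions} alone) is the variant the paper itself sketches in the remark immediately after its proof, so there is no gap in either version.
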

\begin{proof}
This is a direct result of Proposition \ref{PROP:log-fractions} in the case $m = 2$, whereby
\[
\log (2)\Q_{\geq 0} \cap [0,\log (2)] = \left\{\frac{\ell}{k}\log (2) \mid 1 \leq \ell \leq k \right\} \subset \textbf{H}.
\]
By repeated applications of Proposition \ref{PROP:log2} for the case $m=2$ we can extend this to the entire positive real line to get
\[
\log (2) \Q_{\geq 0} = \left(\log (2) \Q_{\geq 0} \cap [0,\log (2)]\right) + \log (2) \N = \left\{\left( \frac{\ell}{k} + n \right)\log (2) \mid 1 \leq \ell \leq k, n \in \N \right\} \subset \boldsymbol H.
\]
Noting that a positive linear scaling of $\Q_{\geq 0}$ is a dense subset of $\R_{\geq 0}$ completes the proof.
\end{proof}
Another approach would have been to use Proposition \ref{PROP:log-fractions} over all positive values of $m$ and noting that $\log (m)$ is unbounded, so we can build an increasing sequence of dense subsets of the intervals $[0,\log (m)]$ whose union is then necessarily a dense subset of $\R_{\geq 0}$.
\begin{remark}
There are only countably many different random substitution and so the set $\boldsymbol H$ is certainly a set of zero-measure in $\R_{\geq 0}$.
\end{remark}

It was shown in Section \ref{SEC:main} with Theorem \ref{THM:main} and Corollary \ref{COR:Perron-numbers} that every topologically transitive shift of finite type is a primitive RS-subshift, and so the logarithm of every Perron number is realised as topological entropy for a primitive RS-subshift.
It is likely that not all elements of $\boldsymbol H$ are given as the logarithm of Perron numbers.
A likely candidate for such a value is given by the entropy of the random Fibonacci substitution $\vartheta \colon 0 \mapsto \{01,10\}, 1 \mapsto \{0\}$ whose entropy was heuristically calculated by Godr\`{e}che and Luck to be $h_\vartheta = \sum_{i=2}^\infty \frac{\log (i)}{\tau^{i+2}} \approx 0.444399 \cdots$ where $\tau$ is the golden mean.
This was then proved by Nilsson to be the true entropy \cite{N:fibonacci-entropy}.

\begin{question}
Is every element $x$ of $\boldsymbol H$ given by $x = \log (\alpha)$ for a Perron number $\alpha$? 
\end{question}
%

\section*{Acknowledgements}
The authors wish to thank Michael Baake, Michael Coons and Chrizaldy Neil Manibo for helpful discussions.
This work is supported by the German Research Foundation (DFG) via the Collaborative Research Centre (CRC 1283) and the Research Centre for Mathematical Modelling (RCM${}^2$) through the faculty of Mathematics, Bielefeld University.

\bibliographystyle{jis}
\bibliography{tilings}

\end{document}